\newcommand{\spec}{{\rm Spec}}
\newcommand{\zar}{{\rm Zar}}
\newcommand{\qspec}{{\rm QSpec}}
\newcommand{\qmax}{{\rm QMax}}
\newcommand{\ms}{\mathscr}
\newcommand{\mf}{\mathbf}
\newcommand{\ad}{{\rm Ad}}
\newcommand{\f}{\mathfrak}
\newcommand{\z}{{\ldots}}
\newcommand{\inssemistar}{\mathrm{SStar}}
\newcommand{\insfinss}{\mathrm{SStar}_f}
\newcommand{\insss}{\mathrm{(S)Star}_f}
\newcommand{\inssubmod}{{\overline{\mf F}}}
\DeclareMathOperator{\chius}{Ad}
\DeclareMathOperator{\Over}{Over} %
\newtheorem{teor}{Theorem}[section]
\newtheorem{cor}[teor]{Corollary}
\newtheorem{prop}[teor]{Proposition}
\newtheorem{lemma}[teor]{Lemma}
\theoremstyle{definition}
\newtheorem{defin}[teor]{Definition}
\newtheorem{ex}[teor]{ \textbf{Example}}
\newtheorem{ex1}{ \textbf{Example}}[subsection]
\newtheorem{oss}[teor]{Remark}
\begin{document}

%%%%%%%%%%%%%%%%%%%%%%%%%%%%%%%%%%%%%%%%
%%%%%%%%%%%%%%%%%%%%%%%%%%%%%%%%%%%%%%%%

\title{Some topological considerations on semistar operations}
\author{Carmelo Antonio Finocchiaro}
\address{Dipartimento di Matematica, Universit\`{a} degli Studi ``Roma Tre'', Largo San Leonardo Murialdo, 1, 00146 Rome, Italy}
\email{carmelo@mat.uniroma3.it}

\author{Dario Spirito}
%\address[ds]{Dipartimento di Matematica, Universit\`{a} degli Studi ``Roma Tre'', Largo San Leonardo Murialdo, 1, 00146 Rome, Italy}
\email{spirito@mat.uniroma3.it}

\begin{abstract}
We consider properties and applications of a new topology, called \emph{the Zariski topology}, on the space $\inssemistar(A)$ of all the semistar operations on an integral domain $A$. We prove that the set of all overrings of $A$, endowed with the classical Zariski topology, is homeomorphic to a subspace of $\inssemistar(A)$. The  topology on $\inssemistar(A)$ provides a general theory, through which we see several algebraic properties of semistar operation as very particular cases of our construction.  Moreover, we show that the subspace $\insfinss(A)$ of all the semistar operations of finite type on $A$ is a spectral space.
\end{abstract}
\keywords{Semistar operations,
Spectral spaces,
Inverse topology}
\subjclass[2010]{13A15,13F30}
\maketitle

%%%%%%%%%%%%%%%%%%%%%%%%%%%%%%%%%%%%%%%%%%
%%%%%%%%%% INTRO
%%%%%%%%%%%%%%%%%%%%%%%%%%%%%%%%%%%%%%%%%%%
%\section*{Introduction}
\section*{Introduction}
The notions of star operation, introduced by Krull in \cite{kr}, and that of a system of ideals, studied extensively by E. Noether, H. Pr\"ufer, and P. Lorentzen from the 1930s, play a central role in the study of the multiplicative structure of the ideals of a ring.   These notions represent a natural and abstract setting to study problems on factorization of ideals. For a recent contribution on this circle of ideas see, for instance,  \cite{ju}.

There are two main ways to generalize star operations: the first is through semiprime operations (see \cite{ep} and \cite{va2}; however, note that they were born independently from the star operation setting), while the second is through the use of \emph{semistar operations}, introduced by A. Okabe and R. Matsuda in \cite{ok-ma}. Semistar operations represent a very powerful tool for classifying domains according to the properties of their ideals, allowing more flexibility than ``classical'' star operations. Such operations are also closely related to the theory of Kronecker function rings. For a deeper insight on the recent developements on this topic, see \cite{fifolo2}, \cite{folo3}, \cite{folo2}, \cite{folo}, \cite{ok-ma2}, \cite{ok-ma3}. 

The main goal of this paper is to {use a topological approach to} extend results in the literature concerning algebraic properties of semistar operations. Some motivation for studying the multiplicative structure of the ideals of an integral domain from a topological point of view can be found in \cite{fifolo2}. In this paper the authors endowed the Riemann-Zariski space $\zar(A)$ (see \cite{zar-sam}) of all the valuation overrings of an integral domain $A$ with several topological structures (the Zariski, the constructible and the inverse topologies) and studied the interplay between the topological properties of a given subspace of $\zar(A)$ and the algebraic properties of the semistar operation determined by such a subspace. By using this approach, the authors investigated, from a topological point of view, the representations of an integrally closed domain as an intersection of valuation overrings.  

In the meanwhile, B. Olberding in \cite{ol} defined the Zariski topology on the space $\Over(A)$ of all overrings of an integral domain $A$ in such a way that both the set of localizations of $A$ (with the topology induced by the Zariski topology of $\spec(A)$) and $\zar(A)$ become subspaces of $\Over(A)$. The main idea of Section 2 is to consider the set $\inssemistar(A)$ of all semistar operations on $A$ and to endow it with a new Zariski topology, in such a way that $\Over(A)$ is identifiable canonically with a subspace of $\inssemistar(A)$ (Proposition \ref{embedding}). After giving the main properties of the Zariski topology on $\inssemistar(A)$, we relate the compactness of the subspaces of $\inssemistar(A)$ with the  finite type property of their infimum (with respect to the natural order on $\inssemistar(A)$). We show (Proposition \ref{compact-fintype}) that the infimum of a compact family of semistar operations of finite type is of finite type, answering a question !
 posed in \cite{proc}, and that the converse is true when each operation of the family is induced by a localization of $A$ or by a valuation ring (Corollary \ref{fin-spectral-car} and Proposition \ref{valuations}). We also conjecture that the converse is true for any family of overrings. Then, we specialize the study of the Zariski topology on the subspace $\insfinss(A)$ of all the semistar operations on $A$ of finite type and we show that this space is spectral (Theorem \ref{spectral}). The proof we give is not constructive; it would be interesting to know if there is a canonical way to find a ring whose prime spectrum is homeomorphic to $\insfinss(A)$.

In Section \ref{sect:funct}, we show that the Zariski topology on $\inssemistar(A)$  has a natural functorial property, in the sense that if $A\subseteq B$  is an extension of integral domains, there is a natural continuous map $\inssemistar(B)\longrightarrow\inssemistar(A)$, which is an embedding if $B$ has the same quotient field as $A$.

The last section of the paper is devoted to a deeper study of the semistar operations induced by  localizations of $A$, which are  usually called \emph{spectral} semistar operations. Their study has been motivated by the possibility of relating the properties of a semistar operation $\star$ with the properties of its stable closure $\tilde{\star}$, which is always a spectral operation of finite type. {We find} a necessary and sufficient condition for the equality of the stable closure of two  semistar operations  (Proposition \ref{weakly-eq}), and identify the stable closure of a semifinite semistar operation (the definition will be recalled later). Both of these results are topological in nature, using the inverse topology of the Zariski topology.

\section{Background material}
We begin with some definitions and preliminary results. In the following with the term \emph{ring} we always mean a commutative ring with identity. Let $A$ be a ring. In the following, the set $\spec(A)$ will be often (but not always) endowed with the Zariski topology, i.e. the topology whose closed sets are of the form
$$
V(\f a):=\{\f p\in \spec(A):\f p\supseteq \f a\}
$$
for any ideal $\f a$ of $A$. The sets of the form $D(f):=\spec(A)-V(fA)$ ($f\in A$) form a basis of open sets for the Zariski topology. \\\\
\subsection{Semistar operations}\label{sect:background-semistar}
 Let $A$ be an integral domain and $K$ be the quotient field of $A$. Any ring $B$ such that $A\subseteq B\subseteq K$ will be called an \emph{ overring of $A$}. We will use the following notation:
\begin{itemize}
\item $\mf f(A)$ is the set of all nonzero finitely generated  fractional ideals of $A$.
\item $\mf F(A)$ is the set of all nonzero fractional ideals of $A$.
\item $\overline{\mf F}(A)$ is the set of all nonzero $A$-submodules of $K$. 
\end{itemize}
Of course, we have $\mf f(A)\subseteq \mf F(A)\subseteq \overline{\mf F}(A)$. For any $F\in \overline{\mf F}(A), G\in \inssubmod(A)\cup\{0\}$, set
$$
(F:G):=\{x\in K:xG\subseteq F\}.
$$
In 1994 A. Okabe and R. Matsuda (see \cite{ok-ma}) introduced the notion of a semistar operation. A semistar operation on $A$ is a function \,\,\, $\star:\overline{\mf F}(A)\longrightarrow\overline{\mf F}(A)$, $F\mapsto F^\star$ satisfying the following axioms, for any nonzero element $k\in K$ and every $F,G\in \overline{\mf F}(A)$:
\begin{itemize}
\item $(kF)^\star=kF^\star$;
\item $F\subseteq G$ implies $F^\star\subseteq G^\star$;
\item $F\subseteq F^\star$;
\item $(F^\star)^\star=F^\star$.
\end{itemize}
If $\star$ is a semistar operation on $A$, a submodule $F\in \inssubmod(A)$ is called \emph{$\star$-closed} if $F=F^\star$. An integral ideal $\f a$ of $A$ is called a \emph{quasi-$\star$-ideal of $A$} if either $\f a=(0)$ or $\f a^\star\cap A=\f a$. It follows easily by definition that $\f a^\star\cap A$ is a quasi-$\star$-ideal of $A$.

If $A$ is $\star$-closed, then $\star$ is called a \emph{(semi)star operation}, while $\star|_{\mf F(A)}$ is called a \emph{star operation}. For background on star operations, see \cite{gi}.

The set $\inssemistar(A)$ of all the semistar operations on $A$ is endowed with a natural partial order, defined by $\star\leq \star'$ if and only if $F^\star\subseteq F^{\star'}$, for any $F\in \overline{\mf F}(A)$. It is well known that $\star\leq \star'$ if and only if every nonzero $\star'$-closed $A$-submodule of $K$ is also $\star$-closed. Moreover, if $\star\leq\star'$, then a quasi-$\star'$-ideal of $A$ is also a quasi-$\star$-ideal, but the last condition does not imply that $\star\leq\star'$.

A prime ideal of $A$ that is a quasi-$\star$-ideal is called a \emph{quasi-$\star$-prime ideal} of $A$. A maximal element in the set of all the proper quasi-$\star$-ideals is called \emph{quasi-$\star$-maximal ideal} of $A$. We will use the following notation:
\begin{itemize}
\item $\qspec^\star(A)$ is the set of all the quasi-$\star$-prime ideals of $A$.
\item $\qmax^\star(A)$ is the set of all the quasi-$\star$-maximal ideals of $A$. 
\end{itemize}
It is easy to see that $\qmax^\star(A)\subseteq \qspec^\star(A)$. If $\star$ is of finite type, then a straightforward application of Zorn's Lemma shows that each proper quasi-$\star$-ideal is contained in a quasi-$\star$-maximal ideal of $A$. 

\begin{ex1}\label{ex:background-semistar}
This example lists some useful specific semistar operations and several techniques to construct specific classes of semistar operations and new semistar operations from old ones.
\begin{enumerate}[(a)]
\item $d:=d_A$ denotes the identity semistar operation on $A$. 
\item If $B$ is an overring of $A$, {we} denote by $\star_{\{B\}}$ the semistar operation on $A$ defined by setting $F^{\star_{\{B\}}}:=FB$, for any $F\in \overline{\mf F}(A)$.
\item Let $\star$ be a semistar operation on $A$. We can associate a new semistar operation $\star_f$ on $A$ by setting $$F^{\star_f}:=\bigcup\{G^\star:G\in \mf f(A) \mbox{ and } G\subseteq F\},$$
for any $F\in \overline{\mf F}(A)$. We call the semistar operation $\star_f$ \emph{the finite type closure of} $\star$. We call $\star$ \emph{a semistar operation of finite type} if $\star=\star_f$. Note that $(\star_f)_f=\star_f$, and thus $\star_f$ is a semistar operation of finite type. In the following, we shall denote by $\insfinss(A)$ the set of the semistar operations of finite type on $A$. 
\item\label{1.1d} $v$ denotes the divisorial semistar operation on $A$, defined by $F^v:=(A:(A:F))$, for any $F\in \overline{\mf F}(A)$. The finite type closure of $v$ is usually denoted by $t$. Note that $t$ (resp. $t|_{\mf F(A)}$) is the biggest (semi)star operation (resp., star operation) of finite type \cite[Lemma 1.12]{pi1}. Furthermore, if $A$ is integrally closed, then it is a Pr\"ufer domain if and only if $t|_{\mf F(A)}$ is the identity \cite[Proposition 34.12]{gi}. 
\item\label{bg:infsup} Let $\mathcal S$ be a nonempty collection of semistar operations on $A$. Then $\bigwedge(\mathcal S)$ is the semistar operation on $A$ defined by setting 
$$F^{\bigwedge(\mathcal S)}:=\bigcap\{F^\star:\star\in \mathcal S\}\qquad \mbox{ for any } F\in \overline{\mf F}(A)$$ 
It is easy to see that $\bigwedge(\mathcal S)$ is the infimum of $\mathcal S$ in the partially ordered set $(\inssemistar(A),\leq)$. Moreover, the semistar operation 
$$
\bigvee(\mathcal S):=\bigwedge(\{\sigma\in \inssemistar(A):\sigma\geq \star, \mbox{ for any }\star\in \mathcal S\})
$$
is the supremum of $\mathcal S$ in the partially ordered set $(\inssemistar(A),\leq)$.
\item Let $Y$ be a nonempty collection of overrings of $A$. By (\ref{bg:infsup}), $\wedge_Y:=\bigwedge(\{\star_{\{B\}}:B\in Y\})$ is a semistar operation on $A$. In other words, the semistar operation $\wedge_Y$ is defined by setting
$$
F^{\wedge_Y}:=\bigcap\{FB:B\in Y\} \qquad\mbox{ for any }F\in \overline{\mf F}(A).
$$
\item If $Y$ is a collection of valuation overrings of $A$, then  $\wedge_Y$ is called \emph{a valutative semistar operation}. In particular, when $Y$ is the set of all the valuation overrings of $A$,  $\wedge_Y$ is called \emph{the $b$-semistar operation} (or \emph{integral closure}) on $A$. 
\item Let $X$ be a nonempty collection of prime ideals of $A$. The semistar operation $s_X:=\wedge_{\{A_{\f p}:\f p\in X\}}$ is called \emph{a spectral semistar operation}. 
\item  We say that a semistar operation is \emph{stable} if $(F\cap G)^\star= F^\star\cap G^\star$ for any $F,G\in \overline{\mf F}(A)$. Since localization commutes with finite intersections, any spectral semistar operation is stable.
\item Let $\star$ be a semistar operation on $A$. For any $F\in \overline{\mf F}(A)$ set 
$$
F^{\widetilde{\star}}:=\bigcup\{ (F:\f a):\f a \mbox{ is a finitely generated ideal of  }A \mbox{ and } \f a^\star=A^\star\}.
$$
Then $\widetilde{\star}$ is a stable and of finite type semistar operation on $A$, called \emph{the stable closure of $\star$}. By \cite[Corollary 3.5(2)]{folo}, we have $\qmax^{\star_f}(A)=\qmax^{\widetilde \star}(A)$, and $\widetilde\star=s_{\qmax^{\star_f}(A)}$. Thus, $\widetilde \star$ is spectral. More precisely, by \cite[Corollary 3.9(2)]{fohu},  the following conditions are equivalent:
\begin{enumerate}[(i)]
\item $\star=\widetilde\star$.
\item $\star$ is stable and of finite type.
\item $\star$ is spectral and of finite type. 
\end{enumerate}
\end{enumerate}
\end{ex1}

\subsection{Spectral spaces.}\label{inv-rem}
Let $X$ be a topological space and let $Y\subseteq X$. We will denote by $\ad(Y)$ the closure of $Y$. A topological space is called \emph{spectral} if it is homeomorphic to the prime spectrum of a ring, endowed with the Zariski topology. In \cite{ho}, the author shows that a topological space is spectral if and only if it is compact (i.e., every open cover has a finite subcover), admits a basis of open and compact subspaces that is closed under finite intersection, and every irreducible closed subset has a unique generic point (i.e., it is the closure of a unique point).

Let $X$ be a spectral space, $\mathscr O$ its topology. It is possible to endow $X$ with another classical topological structure by taking, as basis of closed sets, the collection of all the open and compact subspaces of $(X,\mathscr O)$. The topology obtained is called \emph{the inverse topology on $X$}. Denote by $X^{\rm inv}$ the set $X$, endowed with the inverse topology, and by $\ad^{\rm i}(Y)$ the closure of a subset $Y$ of $X$, with respect to the inverse topology. By \cite[Proposition 8]{ho}, $X^{\rm inv}$ is a spectral space and $\ad^{\rm i}(\{x\})\subseteq \ad^{\rm i}(\{y\})$ if and only if $\ad(\{y\})\subseteq \ad(\{x\})$ (this justifies the choice of the name of this topology). Moreover, for every subset $Y\subseteq X$, $\ad^{\rm i}(Y)$ is compact, with respect to the topology $\mathscr O$ \cite[Remark 2.2 and Proposition 2.6]{fifolo2}.

\section{The Zariski topology on the set of all semistar operations}\label{sect:Zar}
The main goal of this paper is to define and study a new topology on the set of all semistar operations on an integral domain and to investigate how the algebraic properties  and the topological properties of semistar operations are related. In the following, $A$ is an integral domain and $K$ is the quotient field of $A$. 

\begin{defin}
The \emph{Zariski topology on $\inssemistar(A)$} is the topology for which a subbasis of open sets is the collection of all the sets of the form $V_F:=V_F^{(A)}:=\{\star\in\inssemistar(A):1\in F^\star\}$, as $F$ ranges among the nonzero $A$-submodules of $K$. The \emph{Zariski topology on $\insfinss(A)$} is just the subspace topology of the Zariski topology on $\inssemistar(A)$. 
\end{defin}
The following remark will help to focus on the most basic properties of the Zariski topology on $\inssemistar(A)$. 
\begin{oss}\label{basic} We preserve the notation of the beginning of the present section. The following statements hold. 
\begin{enumerate}[\rm (a)]
\item The semistar operation $\wedge_{\{K\}}$ (which sends every nonzero submodule $H$ to the quotient field $K$ of $A$) is clearly the maximum of $(\inssemistar(A),\leq)$. Since each $V_F$ contains $\wedge_{\{K\}}$, so does every nonempty open set; hence $\wedge_{\{K\}}$ is a generic point of $\inssemistar(A)$. Moreover, since $\wedge_{\{K\}}$ is obviously of finite type, we infer that $\insfinss(A)$ is a dense subspace of $\inssemistar(A)$. 
\item The identity operation $d$ is contained in $V_F$ if and only if $1\in F$, and thus if and only if $V_F=\inssemistar(A)$. Hence every nonempty closed set contains $d$: therefore, if $\star\in\inssemistar(A)\setminus\{d\}$, then $\{\star\}$ is not closed. We will see that $\{d\}$ is closed in $\inssemistar(A)$ in the following Proposition \ref{chiusura}. 
\item The topology of $\inssemistar(A)$  is naturally linked to the order $\leq$, in the following sense. If $U\subseteq \inssemistar(A)$ is an open neighborhood of $\star$ and $\star'\geq \star$, then $\star'\in U$. As a matter of fact, by definition there are $A$-submodules  $F_1,\z,F_n$ of $K$ such that $\star\in \bigcap_{i=1}^nV_{F_i}\subseteq U$. Since $\star'\geq \star$, we have $1\in F_i^\star\subseteq F_i^{\star'}$, for $i=1,\z,n$, and thus $\star'\in \bigcap_{i=1}^nV_{F_i}\subseteq U$. 
\item\label{basic:ft} The Zariski topology of $\insfinss(A)$ is determined by the finitely generated fractional ideals of $A$, in the sense that the collection of the sets of the form $U_F:=V_F\cap \insfinss(A)$, where $F$ varies among the finitely generated fractional ideals of $A$, is a subbase. As a matter of fact, it suffices to note that, for any $A$-submodule $G$ of $K$, we have
$$
V_G\cap \insfinss(A)=\bigcup\{U_F: F\in \mf f(A),F\subseteq G\}.
$$
\end{enumerate}
\end{oss}

\begin{prop}\label{chiusura}
We preserve the notation of the beginning of the present section. Then, for any $\star\in \inssemistar(A)$, we have $$\ad(\{\star\})=\{\star'\in \inssemistar(A): \star'\leq \star\}.$$ In particular, $\{d\}$ is the unique closed point in $\inssemistar(A)$. 
\end{prop}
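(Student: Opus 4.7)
The plan is to prove the two inclusions separately, and then observe that the ``in particular'' statement is a direct consequence.

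For the inclusion $\{\star'\in\inssemistar(A):\star'\leq\star\}\subseteq\ad(\{\star\})$, I would simply invoke Remark \ref{basic}(c): if $\star'\leq\star$ and $U$ is any open neighborhood of $\star'$, then $\star\in U$ by upward closure of open sets under $\leq$. Hence every open neighborhood of $\star'$ meets $\{\star\}$, so $\star'\in\ad(\{\star\})$.

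For the reverse inclusion, suppose $\star'\in\ad(\{\star\})$ and take an arbitrary $F\in\overline{\mf F}(A)$; I need to show $F^{\star'}\subseteq F^\star$. Pick any nonzero $x\in F^{\star'}$ and set $G:=x^{-1}F\in\overline{\mf F}(A)$. Using the homogeneity axiom $(kF)^{\star'}=kF^{\star'}$ of a semistar operation, we get $G^{\star'}=x^{-1}F^{\star'}$, and since $x\in F^{\star'}$ this gives $1\in G^{\star'}$, that is, $\star'\in V_G$. Because $V_G$ is a subbasic open neighborhood of $\star'$ and $\star'$ lies in the closure of $\{\star\}$, necessarily $\star\in V_G$, i.e.\ $1\in G^\star=x^{-1}F^\star$, so $x\in F^\star$. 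As $x$ was arbitrary (and $0\in F^\star$ trivially), we conclude $F^{\star'}\subseteq F^\star$, and since $F$ was arbitrary, $\star'\leq\star$.

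For the final assertion, note that the identity $d$ is the minimum of $(\inssemistar(A),\leq)$: the axiom $F\subseteq F^\star$ gives $F^d=F\subseteq F^\star$ for every $F$ and every $\star$. Therefore the first part yields $\ad(\{d\})=\{\star'\in\inssemistar(A):\star'\leq d\}=\{d\}$, so $\{d\}$ is closed. Conversely, if $\{\star\}$ is closed, then $\{\star\}=\ad(\{\star\})$ contains every $\star'\leq\star$; since $d\leq\star$ for all $\star$, this forces $\star=d$, which gives uniqueness.

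I do not anticipate a serious obstacle here; the only substantive point is the passage from ``$x\in F^{\star'}$'' to a subbasic open set, which is handled cleanly by rescaling $F$ by $x^{-1}$ and exploiting the first semistar axiom. This trick is essentially what makes the subbasis $\{V_F\}$ rich enough to separate the order, and it is the reason the closure of a point coincides exactly with the set of operations below it.
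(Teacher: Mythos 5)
Your proof is correct and follows essentially the same route as the paper's: the inclusion $\supseteq$ via Remark \ref{basic}(c), and the reverse inclusion via the rescaled subbasic open set $V_{x^{-1}F}$ (you phrase it directly, the paper contrapositively, but the separating set is identical). The treatment of the closed point $d$ also matches the paper's.
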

\begin{proof}
The inclusion $\supseteq$ follows by Remark \ref{basic}(c). Conversely, if $\star'\not\leq\star$, then there is a $A$-submodule $F$ of $K$ such that $F^{\star'}\nsubseteq F^\star$; hence, if $x\in F^{\star'}\setminus F^\star$, then $\star\notin V_{x^{-1}F}$ while $\star'\in V_{x^{-1}F}$. Therefore, $\inssemistar(A)\setminus V_{x^{-1}F}$ is a closed set containing $\star$ but not $\star'$, and $\star'\notin\chius(\{\star\})$.

Since $d$ is the smallest semistar operation, with respect to $\leq$, the last statement is now clear (see also Remark \ref{basic}(b)).
\end{proof}
\begin{prop}\label{quoziente-canonico}
We preserve the notation given at the beginning of the present section. Then, the following statements hold. 
\begin{enumerate}[\hspace{0.6cm}\rm (1)]
\item The topological space $\inssemistar(A)$ is $T_0$.
\item The canonical map $\Phi:\inssemistar(A)\longrightarrow \insfinss(A)$, $\star\mapsto\star_f$ is a topological retraction. 
\end{enumerate}
\end{prop}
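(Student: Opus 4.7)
My plan is to dispatch the two assertions in order, drawing on Proposition~\ref{chiusura} for (1) and on Remark~\ref{basic}(d) plus a short computation for (2).

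For (1), I will invoke Proposition~\ref{chiusura}, which identifies $\ad(\{\sigma\})$ with $\{\tau\in\inssemistar(A):\tau\leq\sigma\}$. Given distinct $\star,\star'\in\inssemistar(A)$, antisymmetry of the partial order $\leq$ forces at least one of $\star\not\leq\star'$ or $\star'\not\leq\star$ to hold. In the first case, $\star\notin\ad(\{\star'\})$, so the complement of $\ad(\{\star'\})$ is an open neighborhood of $\star$ that misses $\star'$; the other case is symmetric. This gives $T_0$ at once.

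For (2), two things need to be verified: that $\Phi$ fixes $\insfinss(A)$ pointwise, and that $\Phi$ is continuous. The first is immediate from Example~\ref{ex:background-semistar}(c), since $\star\in\insfinss(A)$ means by definition $\star=\star_f$, so $\Phi(\star)=\star$. For continuity, by Remark~\ref{basic}(d) it suffices to show that $\Phi^{-1}(U_F)$ is open in $\inssemistar(A)$ for every $F\in\mf f(A)$. The key observation is that $\star$ and $\star_f$ coincide on finitely generated submodules: on the one hand, for any $G\in\mf f(A)$ with $G\subseteq F$, monotonicity gives $G^\star\subseteq F^\star$, hence $F^{\star_f}\subseteq F^\star$; on the other hand, taking $G=F$ in the definition of $\star_f$ yields $F^\star\subseteq F^{\star_f}$. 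Therefore $F^\star=F^{\star_f}$ whenever $F\in\mf f(A)$, and so
$$
\Phi^{-1}(U_F)=\{\star\in\inssemistar(A):1\in F^{\star_f}\}=\{\star\in\inssemistar(A):1\in F^\star\}=V_F,
$$
which is a subbasic open set of $\inssemistar(A)$.

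There is no real obstacle here; the only substantive point is recognizing that the finite-type subbase supplied by Remark~\ref{basic}(d) is precisely what is needed so that the trivial identity $F^\star=F^{\star_f}$ on finitely generated $F$ makes the preimage of each subbasic open coincide with the corresponding subbasic open of $\inssemistar(A)$. Had we tried to test continuity against $V_F$ for arbitrary $A$-submodules $F$ of $K$, the coincidence $F^\star=F^{\star_f}$ would fail and the argument would collapse—so the choice of subbasis in Remark~\ref{basic}(d) is the whole point.
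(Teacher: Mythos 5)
Your proof is correct and follows essentially the same route as the paper: part (1) via Proposition~\ref{chiusura} together with antisymmetry of $\leq$, and part (2) by checking that $\Phi$ fixes $\insfinss(A)$ and that $\Phi^{-1}(U_F)=V_F$ for $F\in\mf f(A)$, using the subbasis from Remark~\ref{basic}(d). (Your closing remark slightly overstates matters: for an arbitrary submodule $G$ one still has $\Phi^{-1}(V_G\cap\insfinss(A))=\bigcup\{V_F:F\in\mf f(A),\ F\subseteq G\}$, which is open, so continuity would survive either way; but this does not affect the validity of your argument.)
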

\begin{proof} (1). By Proposition \ref{chiusura}, for any semistar operation $\star,\star'$ on $A$, we have $\ad(\{\star\})=\ad(\{\star'\})$ if and only if $\star=\star'$, i.e., $\inssemistar(A)$ satisfies {the} $T_0$ axiom. 

(2). The fact that $\Phi|_{\insfinss(A)}$ {is the identity} follows immediately by definition. Moreover, $\Phi$ is continuous since, for any $F\in \mf f(A)$, we have $\Phi^{-1}(U_F)=V_F$, and $\{U_F: F\in\mf f(A)\}$ is a subbase of $\insfinss(A)$ by Remark \ref{basic}(\ref{basic:ft}). Thus $\Phi$ is a topological retraction. 
\end{proof}

The next goal is to justify the fact that we have called the topology  on $\inssemistar(A)$ \emph{the Zariski topology.} First of all, recall that the set $\Over(A)$ of all the overrings of an integral domain $A$ can be endowed with the Zariski topology whose basic open sets are those of the form $B_F:=\{C\in \Over(A):F\subseteq C\}$, where $F$ ranges among the finite subsets of the quotient field of $A$ (see \cite{ol}), or, equivalently, among the finitely generated fractional ideals of $A$. As we saw in the previous section, we can associate to each $D\in \Over(A)$ the semistar operation (of finite type) $\wedge_{\{D\}}$ such that $F\mapsto FD$, for any $F\in \overline{\mf F}(A)$. Thus we can define {a} natural map $\phi:\Over(A)\longrightarrow \insfinss(A)$, $D\mapsto \wedge_{\{D\}}$, and, since obviously $A^{\wedge_{\{D\}}}=D$, for any $D\in \Over(A)$, we infer immediately that $\phi$ is injective. In the following Proposition we will show more. 
\begin{prop}\label{embedding}
We preserve the notation given at the beginning of this section and endow $\Over(A)$ and $\insfinss(A)$ with their Zariski topologies. Then the natural map $\phi:\Over(A)\longrightarrow \insfinss(A)$, $D\mapsto
\wedge_{\{D\}}$, is a topological embedding. 
\end{prop}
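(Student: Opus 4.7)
The plan is to verify two things separately: (i) $\phi$ is continuous, and (ii) the Zariski topology of $\Over(A)$ is no finer than the initial topology induced by $\phi$. Together with the injectivity (already noted in the paragraph preceding the statement), these give that $\phi$ restricts to a homeomorphism onto $\phi(\Over(A))$ equipped with the subspace topology. By Remark \ref{basic}(\ref{basic:ft}), it is enough to work with subbasic open sets of the form $U_G = V_G \cap \insfinss(A)$ where $G \in \mf f(A)$.

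For continuity, given $G = x_1 A + \dots + x_n A \in \mf f(A)$, I would compute
\[
\phi^{-1}(U_G) = \{D \in \Over(A) : 1 \in G^{\wedge_{\{D\}}}\} = \{D \in \Over(A) : 1 \in GD\}.
\]
Now $1 \in GD$ is equivalent to the existence of $d_1,\dots,d_n \in D$ with $x_1 d_1 + \dots + x_n d_n = 1$. Hence
\[
\phi^{-1}(U_G) = \bigcup \bigl\{ B_{\{d_1,\dots,d_n\}} : (d_1,\dots,d_n) \in K^n,\ \textstyle\sum_i x_i d_i = 1 \bigr\},
\]
which is a union of basic open sets of $\Over(A)$, so open. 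This proves continuity.

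For the reverse inclusion of topologies, I would observe that the basic open sets of $\Over(A)$ can be realized as $\phi$-preimages of subbasic opens of $\insfinss(A)$. Concretely, for any $f \in K \setminus \{0\}$, take $G = f^{-1}A \in \mf f(A)$; then
\[
\phi^{-1}(U_{f^{-1}A}) = \{D \in \Over(A) : 1 \in f^{-1}D\} = \{D \in \Over(A) : f \in D\} = B_{\{f\}}.
\]
Since the sets $B_{\{f\}}$ with $f \in K\setminus\{0\}$ form a subbase for the Zariski topology of $\Over(A)$, this shows that the initial topology induced by $\phi$ is at least as fine as the Zariski topology of $\Over(A)$. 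Combined with continuity (the opposite inclusion), the two topologies coincide, so $\phi$ is a topological embedding.

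The routine nature of this argument means there is no single hard step; the only mild subtlety is recognising that to recover an arbitrary element $f \in D$ as an ``at $f^{-1}A$'' condition, one must use the reciprocal in forming $G = f^{-1}A$, so that $1 \in GD$ becomes exactly $f \in D$. Everything else is a direct unfolding of definitions.
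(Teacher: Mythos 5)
Your proof is correct and follows essentially the same route as the paper's: the continuity computation $\phi^{-1}(U_G)=\{D:1\in GD\}$ expressed via the basic opens $B_{\{d_1,\dots,d_n\}}$, and the identification of $B_{\{f\}}$ with the trace of $U_{f^{-1}A}$ (the paper phrases this as $\phi(B_F)=\bigcap_i U_{f_i^{-1}}\cap\phi(\Over(A))$, which given injectivity is the same computation). Your repackaging of the second half via the initial-topology/subbase criterion is only a cosmetic difference.
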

\begin{proof}
First, we show that $\phi$ is continuous, and it is enough to show that, for any finitely generated fractional ideal $F$ of $A$, the set $\phi^{-1}(U_F)$ is open in $\Over(A)$. Since 
$$
D\in\phi^{-1}(U_F)\iff \wedge_{\{D\}}\in U_F\iff 1\in F^{\wedge_{\{D\}}}\iff 1\in FD,
$$
we have $\phi^{-1}(U_F)=\{D\in \Over(A):1\in FD\}$. Fix a ring $D\in \phi^{-1}(U_F)$. Then there are $d_1,\ldots,d_n\in D$, $f_1,\ldots,f_n\in F$ such that $1=d_1f_1+\cdots+d_nf_n$. Hence $1\in FC$, for each $C\in B_{\{d_1,\ldots,d_n\}}$,  and thus $C\in\phi^{-1}(U_F)$. Therefore, $B_{\{d_1,\ldots,d_n\}}$ is an open neighborhood of $D$ contained in $\phi^{-1}(U_F)$, which is thus open.

Finally we show that the image via $\phi$ of an open set $V$ of $\Over(A)$ is open in $\phi(\Over(A))$ (endowed with the subspace topology). Without loss of generality, we can assume that $V=B_F$, for some finite subset $F:=\{f_1,\z,f_n\}$ of $K-\{0\}$. First, consider the open set $U:=\bigcap_{i=1}^nU_{f_i^{-1}}$ of $\insfinss(A)$. If $\star\in\phi(B_F)$, then $\star=\wedge_{\{C\}}$ for some $C\supseteq F$; hence $f_i\in C$ for every $i$ and $1\in f_i^{-1}C$, i.e., $1\in (f_i^{-1})^{\wedge_{\{C\}}}$. Thus  we have $\phi(B_F)\subseteq U\cap\phi(\Over(A))$.

Conversely, if $\star\in U\cap\phi(\Over(A))$, then $\star=\wedge_{\{C\}}$ and $1\in(f_i^{-1})^{\wedge_{\{C\}}}$ for every $i$; it follows that $f_i\in C$ for every $i$, and thus $C\in B_F$. The equality $\phi(B_F)=U\cap \phi(\Over(A))$ shows that $\phi(B_F)$ is open in $\phi(\Over(A))$. The proof is now complete. 
\end{proof}
The map $\phi$ defined above is only rarely surjective (or, equivalently, an homeomorphism). 
 We shall need the following fact.
\begin{oss}\label{t=d}
An integrally closed domain $R$ is a Pr\"ufer domain if and only if  $t=d$. Indeed, the latter condition is equivalent to the divisoriality of every nonzero finitely generated ideal. This happens if and only if $R$ is a Pr\"ufer domain, by \cite[proof of Theorem 5.1]{he}.
\end{oss}
By \cite[Theorem 2.3]{mim}, the map $\phi$ defined above is surjective if and only if, for every $D\in\Over(A)$, the unique \emph{star} operation of finite type is the identity. Every Pr\"ufer domain has this property; conversely, if $\phi$ is surjective, consider the integral closure $\overline{A}$ of $A$. Then, $t=d$ on $\overline{A}$, and thus $\overline{A}$ is a Pr\"ufer domain, by Remark \ref{t=d}.

We now investigate the algebraic interpretation of compactness for the subspaces of $\insfinss(A)$. 
\begin{prop}\label{compact-fintype}
We preserve the notation given at the beginning of the present section, and let $\Delta$ be a compact subspace of $\insfinss(A)$. Then, the semistar operation $\bigwedge(\Delta)$ is of finite type. 
\end{prop}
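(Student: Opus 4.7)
Set $\star:=\bigwedge(\Delta)$. The inequality $\star_f\leq \star$ is automatic from the construction of the finite-type closure, so the task reduces to showing $\star\leq \star_f$, i.e.\ $F^\star\subseteq F^{\star_f}$ for every $F\in\inssubmod(A)$. My plan is to take an arbitrary element $x\in F^\star$, use the fact that every $\sigma\in\Delta$ is of finite type to cover $\Delta$ by open sets indexed by finitely generated submodules of $F$, and then collapse a finite subcover to a single witness in $\mf f(A)$.

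More concretely, fix $F\in\inssubmod(A)$ and a nonzero $x\in F^\star$ (the case $x=0$ is trivial). For each $\sigma\in\Delta$, since $x\in F^\sigma=\bigcup\{G^\sigma:G\in\mf f(A),\,G\subseteq F\}$, there exists a finitely generated $G_\sigma\subseteq F$ with $x\in G_\sigma^\sigma$; equivalently $1\in (x^{-1}G_\sigma)^\sigma$, that is, $\sigma\in V_{x^{-1}G_\sigma}$. Thus the family
\[
\{\,V_{x^{-1}G}\cap\insfinss(A):G\in\mf f(A),\;G\subseteq F\,\}
\]
is an open cover of $\Delta$ in $\insfinss(A)$. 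By compactness, we can extract finitely many $G_1,\ldots,G_n\in\mf f(A)$, all contained in $F$, such that $\Delta\subseteq V_{x^{-1}G_1}\cup\cdots\cup V_{x^{-1}G_n}$.

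Now set $H:=G_1+\cdots+G_n$, which is finitely generated and still contained in $F$. For any $\sigma\in\Delta$, some index $i$ satisfies $\sigma\in V_{x^{-1}G_i}$, hence $x\in G_i^\sigma\subseteq H^\sigma$. Intersecting over $\sigma\in\Delta$ gives $x\in H^\star$, and since $H\in\mf f(A)$ with $H\subseteq F$, we get $x\in H^\star\subseteq F^{\star_f}$. This proves $F^\star\subseteq F^{\star_f}$ and hence $\star=\star_f$.

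The only delicate point is the passage from the pointwise finite-type property (one $G_\sigma$ per operation) to a single finitely generated $H$ that works for all of $\Delta$; this is precisely where compactness enters, and the sum $H=G_1+\cdots+G_n$ is the natural device to consolidate the finite subcover. Everything else is a routine unwinding of the definitions of $\bigwedge(\Delta)$, $\star_f$, and the subbasic open sets $V_F$.
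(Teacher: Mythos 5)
Your proof is correct and follows essentially the same route as the paper's: for each $\sigma\in\Delta$ pick a finitely generated $G_\sigma\subseteq F$ witnessing $x\in G_\sigma^\sigma$, cover $\Delta$ by the subbasic open sets $U_{x^{-1}G_\sigma}$, extract a finite subcover by compactness, and sum the finitely many witnesses into a single $H\in\mf f(A)$ with $x\in H^{\bigwedge(\Delta)}$. No changes are needed.
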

\begin{proof}
Set $\Delta:=\{\star_i:i\in I\}$,  $\star:=\bigwedge(\Delta)$, fix a $A$-submodule $F$ of $K$ and let $x\in F^\star$. Since $F^\star=\bigcap_{i\in I}F^{\star_i}$, and each $\star_i$ is of finite type, there are finitely generated ideals $G_i\subseteq F$ such that $x\in G_i^{\star_i}$; thus, for any $i$, $1\in x^{-1}G_i^{\star_i}=(x^{-1}G_i)^{\star_i}$ and $\star_i\in U_{x^{-1}G_i}=:\Omega_i$. Therefore, $\{\Omega_i : i\in I\}$ is an open cover of $\Delta$, and by compactness it admits  a finite subcover $\{\Omega_{i_1},\ldots,\Omega_{i_n}\}$. Set $G:=G_{i_1}+\cdots+G_{i_n}\subseteq F$; we claim that $x\in G^\star$, and this implies that $\star$ is of finite type.

For every $i\in I$, there is at least a $\Omega_{j_i}$ such that $\star_i\in \Omega_{j_i}$; hence $\star_i\in U_{x^{-1}G_{j_i}}$ and $1\in(x^{-1}G_{j_i})^{\star_i}$, i.e., $x\in G_{j_i}^{\star_i}\subseteq G^{\star_i}$. Therefore, $x\in\bigcap_{i\in I}G^{\star_i}=G^\star$.
\end{proof}
\begin{cor}\label{comp-overrings}
We preserve the notation given at the beginning of the section and let $Y$ be a compact subspace of $\Over(A)$. Then, the semistar operation $\wedge_Y$ is of finite type. 
\end{cor}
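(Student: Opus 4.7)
The plan is to derive this corollary directly from Proposition \ref{compact-fintype} by pushing the compact set $Y\subseteq\Over(A)$ over to $\insfinss(A)$ along the embedding $\phi$ of Proposition \ref{embedding}. Concretely, I would set $\Delta:=\phi(Y)=\{\wedge_{\{B\}}:B\in Y\}$ and then check the hypotheses of Proposition \ref{compact-fintype} for $\Delta$.

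The first step is to note that each $\wedge_{\{B\}}$ actually belongs to $\insfinss(A)$: for $F\in\inssubmod(A)$ one has $F^{\wedge_{\{B\}}}=FB=\bigcup\{GB:G\in\mf f(A),\, G\subseteq F\}=\bigcup\{G^{\wedge_{\{B\}}}:G\in\mf f(A),\, G\subseteq F\}$, so $\wedge_{\{B\}}=(\wedge_{\{B\}})_f$. Hence $\Delta\subseteq\insfinss(A)$, and $\phi$ really maps into $\insfinss(A)$ as already used in Proposition \ref{embedding}. The second step invokes that very proposition: $\phi$ is a homeomorphism onto its image, in particular continuous, so compactness of $Y$ in $\Over(A)$ transfers to compactness of $\Delta=\phi(Y)$ inside $\insfinss(A)$ (and equivalently inside $\phi(\Over(A))$, which carries the subspace topology). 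The third step is purely definitional: from Example \ref{ex:background-semistar}(f),
$$
\wedge_Y=\bigwedge(\{\star_{\{B\}}:B\in Y\})=\bigwedge(\Delta).
$$
Applying Proposition \ref{compact-fintype} to $\Delta$ then yields immediately that $\wedge_Y=\bigwedge(\Delta)$ is of finite type.

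I do not expect any genuine obstacle, as the corollary is essentially a translation of Proposition \ref{compact-fintype} through the embedding $\phi$. The only subtle point to be careful about is that Proposition \ref{compact-fintype} demands the compact family to sit inside $\insfinss(A)$; this is exactly why the initial observation that each $\wedge_{\{B\}}$ is of finite type is necessary. Once that is in place, everything is formal.
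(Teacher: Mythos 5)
Your proposal is correct and is exactly the paper's argument: the authors' proof is the one-line ``Apply Propositions \ref{embedding} and \ref{compact-fintype}'', and you have simply spelled out the details (that $\phi(Y)$ is a compact subset of $\insfinss(A)$ and that $\wedge_Y=\bigwedge(\phi(Y))$). No gaps.
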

\begin{proof}
Apply Propositions \ref{embedding} and \ref{compact-fintype}.
\end{proof}

Recall that a subspace $Y$ of $\Over(A)$ is \emph{locally finite} if any nonzero element of $A$ is non-invertible only in finitely many rings of $Y$. The following result generalizes \cite[Theorem 2(4)]{an2}.
\begin{prop}\label{prop:locfin}
We preserve the notation given at the beginning of this section. Let $\{B_i:i\in I\}$ be a locally finite family of overrings of $A$ and, for any $i\in I$, let $\star_i$ be a semistar operation of finite type on $B_i$. Then the map $\star:F\mapsto\bigcap_{i\in I}(FB_i)^{\star_i}$ is a semistar operation of finite type on $A$.
\end{prop}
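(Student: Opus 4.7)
The plan is to split the proof into two independent tasks: first confirming that $\star$ is a semistar operation on $A$, and then establishing that it is of finite type. For the semistar axioms, I would dispatch $(kF)^\star=kF^\star$, monotonicity, and $F\subseteq F^\star$ quickly by invoking the analogous properties of each $\star_i$ together with the chain $F\subseteq FB_i\subseteq(FB_i)^{\star_i}$. The only step requiring a moment of care is idempotency: to obtain $(F^\star)^\star\subseteq F^\star$, I would exploit that $F^\star\subseteq(FB_j)^{\star_j}$ for every $j$ and that $(FB_j)^{\star_j}$ is a $B_j$-module, which forces $F^\star B_j\subseteq(FB_j)^{\star_j}$ and hence $(F^\star B_j)^{\star_j}\subseteq(FB_j)^{\star_j}$; intersecting over $j$ concludes.

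The nontrivial work is the finite type property. Given $F\in\overline{\mathfrak F}(A)$ and $x\in F^\star$, the aim is to produce a finitely generated $A$-submodule $G\subseteq F$ with $x\in G^\star$. The idea would be to clear denominators: pick any nonzero $f\in F$ and write $x/f=c/s$ with $c,s\in A$ and $s\neq 0$, so that $cf\in F$. By the locally finite hypothesis applied to the element $s$, the set $J:=\{i\in I:s\notin B_i^{\times}\}$ is finite; and for every $i\notin J$ one has $s^{-1}\in B_i$, so $x=s^{-1}(cf)\in (cfA)B_i$. Setting $G_0:=cfA$ already handles all indices outside $J$.

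For each of the finitely many $i\in J$, I would use the finite type of $\star_i$ on $B_i$ to extract a finitely generated $B_i$-submodule of $FB_i$ whose $\star_i$-closure contains $x$, and then rewrite its $B_i$-generators as $B_i$-linear combinations of elements of $F$ to obtain a finitely generated $A$-submodule $G_i\subseteq F$ with $x\in(G_iB_i)^{\star_i}$. Taking $G:=G_0+\sum_{i\in J}G_i$ then yields a finitely generated $A$-submodule of $F$ with $x\in(GB_i)^{\star_i}$ for every $i$ (from $G_0$ when $i\notin J$, from $G_i$ when $i\in J$), so $x\in G^\star$. The main obstacle is the finite type step, where one must manufacture a single finitely generated $G$ that works for all $B_i$ simultaneously; local finiteness is precisely what reduces this to a finite problem, by trivializing the indices outside $J$ through the fixed piece $G_0=cfA$.
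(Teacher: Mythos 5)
Your proof is correct, but it follows a genuinely different route from the paper's. The paper factors the statement through its topological machinery: it first observes (via Proposition \ref{funtoriale1}(\ref{funtoriale1:fintype})) that each map $\star_i^\sharp\colon F\mapsto (FB_i)^{\star_i}$ is already a semistar operation of finite type on $A$, so that $\star=\bigwedge(\{\star_i^\sharp:i\in I\})$, and then reduces the finite type property to showing that $\Delta=\{\star_i^\sharp:i\in I\}$ is a \emph{compact} subspace of $\insfinss(A)$, which it does with Alexander's subbasis theorem (local finiteness supplies, for a fixed subbasic $U_F$ in the cover and a fixed nonzero $x_0\in F$, a cofinite set of indices $i$ with $x_0x_0^{-1}=1\in FB_i$); Proposition \ref{compact-fintype} then finishes. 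You instead argue entirely at the level of elements: you verify the semistar axioms by hand (your idempotency argument, using that $(FB_j)^{\star_j}$ is a $B_j$-module so that $F^\star B_j\subseteq (FB_j)^{\star_j}$, is exactly right and in effect reproves the relevant part of Proposition \ref{funtoriale1}), and for finite type you clear denominators to get $x=s^{-1}(cf)$ with $s\in A\setminus\{0\}$, use local finiteness of $s$ to trivialize all but finitely many indices via $G_0=cfA$, and handle the finite residue $J$ by pulling back finitely generated $B_i$-generators to a finitely generated $A$-submodule $G_i\subseteq F$. Both uses of local finiteness are the same in spirit (a single nonzero element of $A$ reduces the problem to finitely many indices), but your argument is self-contained and more elementary, avoiding the Zariski topology on $\insfinss(A)$ altogether, whereas the paper's proof is shorter given its earlier results and better illustrates the topological theme (compactness $\Rightarrow$ finite type) that the section is built around. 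The only points worth making explicit in a final write-up are the trivial case $x=0$ and the observation that a finitely generated $A$-submodule of $K$ is automatically a fractional ideal, so $G\in\mf f(A)$.
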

\begin{proof}
Let $\star_i^\sharp$ be the map $\star_i^\sharp:F\mapsto(FB_i)^{\star_i}$. Borrowing Proposition \ref{funtoriale1}(\ref{funtoriale1:fintype}), we note that $\star_i^\sharp$ is a semistar operation on $A$ of finite type, since $\star_i$ is of finite type on $B_i$. Moreover, $\star=\bigwedge(\Delta)$, where $\Delta:=\{\star_i^\sharp: i\in I\}$, and by Proposition \ref{compact-fintype} it suffices to show that $\Delta$ is compact.

Let $\mathcal U$ be an open cover of $\Delta$. By Alexander's subbasis Theorem, we can assume that each set in $\mathcal U$ is a subbasic open set of the Zariski topology. Choose an ideal $F\in \mf f(A)$ such that $U_F\in \mathcal U$ and let $x_0\in F-\{0\}$. By local finiteness, there is a finite subset $I'\subseteq I$ such that $x_0,x_0^{-1}\in B_i$, for any $i\in I-I'$. Thus we have $1=x_0x_0^{-1}\in FB_i\subseteq (FB_i)^{\star_i}=:F^{\star_i^\sharp}$, for any $i\in I-I'$. For every $i\in I$, there is a $F_i$ such that $\star_i^\sharp\in U_{F_i}$; hence, $\{U_{F_i}:i\in I'\}\cup\{U_F\}$ is a finite subcover of $\mathcal U$, and thus $\Delta$ is compact.
\end{proof}
%Note that the previous proposition can fail when the condition of local finiteness of the collection $\{B_i:i\in I\}$ is replaced by compactness, as the following example shows. 
\begin{cor}
Let $Y$ be a locally finite subset of $\Over(A)$. Then, $\wedge_Y$ is of finite type. 
\end{cor}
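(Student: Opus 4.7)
The plan is to deduce this as an immediate specialization of Proposition \ref{prop:locfin}. Recall that, by definition, $\wedge_Y$ sends each $F \in \inssubmod(A)$ to $\bigcap_{B \in Y} FB$, which is exactly the form $F \mapsto \bigcap_{B \in Y}(FB)^{\star_B}$ with the choice $\star_B := d_B$, the identity semistar operation on $B$.

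First, I would observe that the identity operation $d_B$ on any overring $B$ is trivially of finite type: for every $F \in \inssubmod(B)$, we have $F^{d_B} = F = \bigcup\{G : G \in \mf f(B),\ G \subseteq F\}$, so $d_B = (d_B)_f$. Thus the family $\{d_B : B \in Y\}$ satisfies the hypothesis of Proposition \ref{prop:locfin} on the operations, while $Y$ itself supplies the required local finiteness of the underlying overrings.

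Applying Proposition \ref{prop:locfin} to this data, the map $F \mapsto \bigcap_{B \in Y}(FB)^{d_B} = \bigcap_{B \in Y} FB$ is a semistar operation of finite type on $A$. Since this map coincides with $\wedge_Y$, the conclusion follows.

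There is no real obstacle here: the only thing to verify is that the identity operation on each $B$ is of finite type, which is immediate. The entire content of the corollary is already packaged into Proposition \ref{prop:locfin}, and the corollary is simply the special case where all the operations on the overrings are trivial.
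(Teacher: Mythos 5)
Your proof is correct and is exactly the paper's argument: apply Proposition \ref{prop:locfin} with the identity semistar operation $d_B$ on each $B\in Y$, which is trivially of finite type, so that $F\mapsto\bigcap_{B\in Y}(FB)^{d_B}=\bigcap_{B\in Y}FB=F^{\wedge_Y}$ is of finite type. Nothing further is needed.
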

\begin{proof}
Apply the previous Proposition by taking the identity semistar operation on each ring in $Y$.  
\end{proof}
We now show that the order structure of the intersection of a nonempty family of subbasic open sets of the Zariski topology is particularly simple. 
\begin{prop}\label{aperti-compatti}
We preserve the notation of the beginning of this section and let $\{V_{F_i}:i\in I\}$ be a nonempty family of subbasic open sets of the Zariski topology of $\inssemistar(A)$. The following statements hold. 
\begin{enumerate}[\hspace{0.6cm}\rm (1)]
\item $\bigcap\{V_{F_i}:i\in I\}$ is a complete lattice (as a subset of the partially ordered set $(\inssemistar(A),\leq)$).
\item $\bigcap\{V_{F_i}:i\in I\}$ is a compact subspace of $\inssemistar(A)$. In particular, $V_F$ is compact for every $F\in \inssubmod(A)$. 
\end{enumerate}
\end{prop}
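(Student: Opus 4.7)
The plan is to exploit the upward-closedness of the subbasic sets $V_F$ noted in Remark \ref{basic}(c), together with the pointwise formula for infima from Example \ref{ex:background-semistar}(\ref{bg:infsup}). Both parts will hinge on a single core observation: the set $W:=\bigcap_{i\in I}V_{F_i}$ is closed under arbitrary infima in $\inssemistar(A)$. Indeed, for any nonempty $\mathcal S\subseteq W$ and every $i\in I$ one has $F_i^{\bigwedge(\mathcal S)}=\bigcap_{\star\in\mathcal S}F_i^\star\ni 1$, so $\bigwedge(\mathcal S)\in W$; and the top element $\wedge_{\{K\}}$ of $\inssemistar(A)$, which plays the role of the empty infimum, lies in $W$ trivially.

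For part (1), this closure property makes $W$ a closure system inside the complete lattice $\inssemistar(A)$: infima in $W$ are computed as in $\inssemistar(A)$, and the supremum in $W$ of a subset $\mathcal S\subseteq W$ can be defined as the infimum, taken in $W$, of the set of upper bounds of $\mathcal S$ in $W$, which is nonempty because it contains $\wedge_{\{K\}}$. Hence $W$ is itself a complete lattice with the induced order.

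For part (2), I would invoke Alexander's subbase theorem applied to the subbase $\{V_G\cap W:G\in\inssubmod(A)\}$ of the subspace $W$. Given a subbasic cover $\{V_{G_k}\cap W\}_{k\in K}$ of $W$, the claim is that some single $V_{G_{k_0}}$ already contains $W$, which produces a one-element subcover. Otherwise, for each $k\in K$ choose $\star_k\in W\setminus V_{G_k}$ and form $\sigma:=\bigwedge(\{\star_k:k\in K\})$, which lies in $W$ by the opening observation. From $\sigma\leq\star_k$ one deduces $G_k^\sigma\subseteq G_k^{\star_k}$, whence $1\notin G_k^\sigma$ and $\sigma\notin V_{G_k}$ for every $k$, contradicting the covering hypothesis. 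The ``in particular'' clause then falls out by specialising to the one-element family $\{V_F\}$.

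The main difficulty I anticipate is conceptual rather than computational: one has to notice that the upward-closedness of the subbasic opens $V_F$, combined with the closure of $W$ under arbitrary infima, forces every subbasic cover of $W$ to admit a \emph{singleton} subcover, so that Alexander's theorem applies in an unusually strong form. Once that point is in hand, the verification reduces to routine manipulation of the identity $F^{\bigwedge(\mathcal S)}=\bigcap_{\star\in\mathcal S}F^\star$ and of the monotonicity relation $\sigma\leq\tau\Rightarrow G^\sigma\subseteq G^\tau$.
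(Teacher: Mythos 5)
Your proof is correct and rests on the same two pillars as the paper's own argument: closure of $W=\bigcap_{i\in I}V_{F_i}$ under arbitrary infima, and the upward-closedness of open sets. The only difference is cosmetic: for compactness the paper simply observes that the minimum $\bigwedge(W)$ belongs to $W$ by part (1), so whichever member of an arbitrary open cover contains it must contain all of $W$ by Remark \ref{basic}(c), which yields a singleton subcover directly and avoids the appeal to Alexander's subbase theorem.
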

\begin{proof}
Set $V:=\bigcap_{i\in I}V_{F_i}$ and  let $\Delta$ be a nonempty subset of $V$. By Example \ref{sect:background-semistar}(\ref{bg:infsup}), $\sharp:=\bigvee(\Delta)$ and $\flat:=\bigwedge(\Delta)$ are, respectively, the supremum and the infimum of $\Delta$ in $\inssemistar(A)$. Thus it suffices to show that $\sharp,\flat\in V$. Clearly, $\sharp\in V$, because $\sharp\geq \star$, for any $\star\in\Delta$, and thus $\sharp$ belongs to each open set containing some $\star$, by Remark \ref{basic}(c). Furthermore, for any $i\in I$ we have
$$
1\in \bigcap_{\star\in V_{F_i}}F_i^\star\subseteq \bigcap_{\star\in V}F_i^\star=:F_i^\flat,
$$
and thus $\flat\in V$. Statement (1) is completely proved. 

Now let $\mathcal U$ be an open cover of $V$. By (1), $\flat\in V$, and thus there is an open set $U_0\in \mathcal U$ such that $\flat\in U_0$. Finally, by Remark \ref{basic}(c), $U_0$ must contain the whole $V$. Statement (2) is now clear. 
\end{proof}
The next goal is to show that $\insfinss(A)$, endowed with the Zariski topology, is a spectral space. To do this, we will use the following lemma, whose proof, \emph{mutatis mutandis}, is based on the argument given in \cite[p. 1628]{an} in the star operation setting. If $\sigma_1,\z,\sigma_n$ are semistar operations on $A$ we denote by $\sigma_1\circ\z\circ\sigma_n$ the usual composition of $\sigma_1,\z,\sigma_n$ as functions. 
\begin{lemma}\label{sup}
Let $Y$ be a nonempty collection of semistar operations of finite type on an integral domain $A$. Then $\bigvee (Y)$ is of finite type and, for any $F\in \overline{\mf F}(A)$, we have
$$
F^{\bigvee(Y)}=\bigcup\{F^{\sigma_1\circ \z\circ\sigma_n}:\sigma_1,\z,\sigma_n\in Y,n\in\mathbb{N}\}.
$$
\end{lemma}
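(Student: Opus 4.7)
The plan is to define $\star \in \inssemistar(A)$ directly by the right-hand side of the claimed formula, verify it is a semistar operation of finite type, and then check it is the supremum.

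\textbf{Step 1: define $\star$ and observe directedness.} For each $F\in\inssubmod(A)$ set
$$
F^\star:=\bigcup\{F^{\sigma_1\circ\cdots\circ\sigma_n}:\sigma_1,\ldots,\sigma_n\in Y,\ n\in\mathbb N\}.
$$
The key preliminary observation is that this family of submodules is \emph{directed} by inclusion: given two words $\sigma:=\sigma_1\circ\cdots\circ\sigma_n$ and $\tau:=\tau_1\circ\cdots\circ\tau_m$, one has $F\subseteq F^\tau$ and hence $F^\sigma\subseteq(F^\tau)^\sigma=F^{\sigma\circ\tau}$, and analogously $F^\tau\subseteq F^{\sigma\circ\tau}$. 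In particular $F^\star\in\inssubmod(A)$.

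\textbf{Step 2: verify the four axioms of a semistar operation.} The first three axioms (compatibility with multiplication by $k\in K\setminus\{0\}$, monotonicity, and $F\subseteq F^\star$) follow termwise from the corresponding axioms for each $\sigma_i\in Y$, with the third obtained by taking $n=1$ and any fixed $\sigma\in Y$. The delicate point is idempotence $(F^\star)^\star=F^\star$. Here I use two facts: a composition $\mu=\tau_1\circ\cdots\circ\tau_m$ of finite-type semistar operations commutes with directed unions (since each $\tau_j$ does, by a finite-type argument, and the property is preserved under composition because each $\tau_j$ is monotone, so directedness propagates); and the concatenation of two words in $Y$ is again a word in $Y$. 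Thus for any word $\mu$,
$$
(F^\star)^\mu=\mu\Bigl(\bigcup_{w}F^{w}\Bigr)=\bigcup_{w}F^{\mu\circ w}\subseteq F^\star,
$$
and taking the union over $\mu$ yields $(F^\star)^\star\subseteq F^\star$; the reverse inclusion is the third axiom.

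\textbf{Step 3: $\star$ is of finite type.} The class of maps $\overline{\mf F}(A)\to\overline{\mf F}(A)$ satisfying ``$x\in\psi(F)$ implies $x\in\psi(G)$ for some finitely generated $G\subseteq F$'' is closed under composition: if $\sigma,\tau$ have this property and $x\in\sigma(\tau(F))$, pick a finitely generated $G\subseteq\tau(F)$ with $x\in\sigma(G)$, then replace each generator of $G$ by a witness in $\tau(H_i)$ with $H_i\subseteq F$ finitely generated, and set $H:=H_1+\cdots+H_k$. By induction every word $\sigma_1\circ\cdots\circ\sigma_n$ has this property, and passing to the directed union $F^\star$ preserves it.

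\textbf{Step 4: $\star=\bigvee(Y)$.} Clearly $\star\ge\sigma$ for every $\sigma\in Y$, by inspection of the $n=1$ term. Conversely, if $\star'\in\inssemistar(A)$ satisfies $\star'\ge\sigma$ for all $\sigma\in Y$, then by induction on word length one shows $F^{\sigma_1\circ\cdots\circ\sigma_n}\subseteq F^{\star'}$: the case $n=1$ is the hypothesis, and the inductive step applies $\sigma_1\le\star'$ to the inclusion $F^{\sigma_2\circ\cdots\circ\sigma_n}\subseteq F^{\star'}$ and uses idempotence of $\star'$. Taking the union over all words gives $F^\star\subseteq F^{\star'}$, so $\star\le\star'$.

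The main obstacle is Step 2, specifically pushing idempotence through the directed union, which is why the finite-type hypothesis on the elements of $Y$ is essential; everything else is a careful bookkeeping of compositions.
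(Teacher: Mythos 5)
Your proof is correct and follows essentially the same route as the paper, which does not spell out the argument but explicitly defers to the construction of suprema of finite-type (star) operations via finite compositions in \cite[p.~1628]{an}; your Steps 1--4 are precisely that argument, \emph{mutatis mutandis}, with the two key points (finite-type operations commute with directed unions, and concatenation of words closes the union under each word) handled correctly.
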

\begin{teor}\label{spectral}
Preserve the notation given at the beginning of the present section. The set $\insfinss(A)$, endowed with the Zariski topology, is a spectral space. 
\end{teor}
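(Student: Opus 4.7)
The plan is to verify Hochster's characterization of spectral spaces: the space must be $T_0$, compact, admit a basis of compact open subsets closed under finite intersection, and be sober. The $T_0$ condition is already supplied by Proposition \ref{quoziente-canonico}(1), so the task splits into a routine compactness/basis argument and a more delicate sobriety argument.

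For compactness and the compact basis, I would exploit the continuous retraction $\Phi\colon\inssemistar(A)\to\insfinss(A)$ from Proposition \ref{quoziente-canonico}(2). Since $\Phi^{-1}(U_F)=V_F$ for every $F\in\mf f(A)$ and $\Phi$ is surjective, one has $\Phi(V_{F_1}\cap\z\cap V_{F_n})=U_{F_1}\cap\z\cap U_{F_n}$ for any $F_j\in\mf f(A)$. By Proposition \ref{aperti-compatti} the preimage is compact in $\inssemistar(A)$, so continuity of $\Phi$ makes each such finite intersection compact in $\insfinss(A)$; by Remark \ref{basic}(\ref{basic:ft}) these intersections form a basis, which is patently closed under finite intersection. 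Taking $F=A$ shows $\insfinss(A)=U_A$ is itself compact.

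The core of the proof is sobriety. Let $C\subseteq\insfinss(A)$ be a nonempty irreducible closed subset and set $\sharp:=\bigvee(C)$, the supremum taken in $\inssemistar(A)$; by Lemma \ref{sup} this operation is of finite type, hence $\sharp\in\insfinss(A)$, and by Proposition \ref{chiusura} the closure of $\{\sharp\}$ in $\insfinss(A)$ is $\{\tau\in\insfinss(A):\tau\leq\sharp\}\supseteq C$. Since $C$ is closed, it suffices to prove $\sharp\in C$: this reverses the inclusion and exhibits $\sharp$ as the (by $T_0$, unique) generic point of $C$. Suppose for contradiction that $\sharp$ lies in a basic open $U:=U_{F_1}\cap\z\cap U_{F_n}$ disjoint from $C$. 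I would prove by induction on $m$ the following sub-claim: whenever $\sigma_1,\z,\sigma_m\in C$ and $F\in\mf f(A)$ satisfy $1\in F^{\sigma_1\circ\z\circ\sigma_m}$, one has $U_F\cap C\neq\emptyset$. The case $m=1$ is immediate. For the inductive step, apply the finite-type property of $\sigma_1$ to the identity $F^{\sigma_1\circ\z\circ\sigma_m}=(F^{\sigma_2\circ\z\circ\sigma_m})^{\sigma_1}$ to produce a finitely generated $G=(g_1,\z,g_k)A\subseteq F^{\sigma_2\circ\z\circ\sigma_m}$ with $1\in G^{\sigma_1}$, where we may assume $g_i\neq 0$. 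The compatibility $(kF)^{\sigma_1\circ\z\circ\sigma_m}=kF^{\sigma_1\circ\z\circ\sigma_m}$ gives $1\in(g_i^{-1}F)^{\sigma_2\circ\z\circ\sigma_m}$, and since $g_i^{-1}F\in\mf f(A)$, the inductive hypothesis yields $\tau_i\in C$ with $g_i\in F^{\tau_i}$. The subbasic opens $U_G,U_{g_1^{-1}F},\z,U_{g_k^{-1}F}$ then each meet $C$ (at $\sigma_1,\tau_1,\z,\tau_k$), so the irreducibility of $C$ forces their intersection with $C$ to contain some $\tau$; this $\tau$ satisfies $G\subseteq F^\tau$, so $1\in G^\tau\subseteq(F^\tau)^\tau=F^\tau$, i.e.\ $\tau\in U_F\cap C$. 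Applying the sub-claim to each $F_j$, after using Lemma \ref{sup} to rewrite $1\in F_j^\sharp$ as $1\in F_j^{\sigma_1\circ\z\circ\sigma_m}$ for some composition in $C$, gives $U_{F_j}\cap C\neq\emptyset$; a final use of irreducibility yields $U\cap C\neq\emptyset$, a contradiction.

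The main obstacle is the sub-claim itself, which interweaves the finite-type hypothesis, the description of $\bigvee(C)$ through compositions provided by Lemma \ref{sup}, and the irreducibility of $C$. The difficulty is that a witness to $1\in F^\sharp$ is a composition $\sigma_1\circ\z\circ\sigma_m$ of elements of $C$, yet that composition itself typically does not lie in $C$; the induction passes the ``blame'' down through the composition until irreducibility can collapse finitely many subbasic neighbourhoods simultaneously into a single element $\tau\in C$ witnessing $1\in F^\tau$.
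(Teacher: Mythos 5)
Your proof is correct, but it follows a genuinely different route from the paper. The paper invokes the ultrafilter criterion for spectrality from \cite{Fi}: given an ultrafilter $\ms U$ on $\insfinss(A)$, it produces the distinguished operation $\bigvee(\{\bigwedge(U_F):U_F\in\ms U\})$ --- of finite type by Propositions \ref{compact-fintype} and \ref{aperti-compatti} together with Lemma \ref{sup} --- and checks that it lies in $X_{\mathcal S}(\ms U)$, which settles all of Hochster's conditions at once. You instead verify Hochster's axioms directly: your compactness argument (pushing the compact sets $V_{F_1}\cap\z\cap V_{F_n}$ of Proposition \ref{aperti-compatti} forward along the surjective retraction $\Phi$ of Proposition \ref{quoziente-canonico}) is clean and correct, and your sobriety argument is the real novelty. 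There you identify the generic point of an irreducible closed set $C$ explicitly as $\bigvee(C)$, and the inductive sub-claim --- unwinding a witness $1\in F^{\sigma_1\circ\z\circ\sigma_m}$ from Lemma \ref{sup} one composition factor at a time, using the finite-type hypothesis to extract finitely many subbasic neighbourhoods that irreducibility then collapses into a single $\tau\in C\cap U_F$ --- is a sound and self-contained replacement for the ultrafilter machinery. What the paper's route buys is brevity, at the cost of relying on the external criterion of \cite{Fi}; what your route buys is independence from that reference plus the extra structural information that every irreducible closed subset of $\insfinss(A)$ has its supremum as generic point, which is a statement of independent interest.
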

\begin{proof}
Let $\ms U$ be an ultrafilter on $X:=\insfinss(A)$ and let $\mathcal S:=\{U_F:F\in \mf f(A)\}$ be the canonical subbasis of the Zariski topology. In view of \cite[Corollary 3.3]{Fi}, it suffices to show that the set
$$
X_{\mathcal S}(\ms U):=\{\star\in X: [\forall U_F\in \mathcal S, \star\in U_F\iff U_F\in \ms U]\}
$$
is nonempty. By Propositions \ref{compact-fintype} and \ref{aperti-compatti}, any semistar operation of the form $\bigwedge(U_F)$ (where $F\in \mf f(A)$) is of finite type. Thus the semistar operation $\star:=\bigvee(\{\bigwedge(U_F):U_F\in \ms U\})$ is of finite type. We claim that  $\star\in X_{\mathcal S}(\ms U)$. Fix a finitely generated fractional ideal $F$ of $A$. It suffices to show that $\star\in U_F$ if and only if $U_F\in \ms U$. First, assume $\star\in U_F$, i.e., $1\in F^\star$. By Lemma \ref{sup}, there exist finitely generated fractional ideals $F_1,\z,F_n$ of $A$ (not necessarily distinct) such that $1\in F^{\bigwedge(U_{F_1})\circ \z\circ \bigwedge(U_{F_n})}$ and $U_{F_i}\in \ms U$, for any $i=1,\z,n$. Take a semistar operation $\sigma\in \bigcap_{i=1}^nU_{F_i}$. By definition, $\sigma\geq \bigwedge(U_{F_i})$, for $i=1,\z,n$, and thus 
$$
1\in F^{\bigwedge(U_{F_1})\circ \z\circ \bigwedge(U_{F_n})}\subseteq F^{\sigma\circ\z\circ\sigma}=F^\sigma,
$$
i.e, $\sigma\in U_F$. This shows that $\bigcap_{i=1}^nU_{F_i}\subseteq U_F$ and thus, by definition of ultrafilter, $U_F\in\ms U$, since $U_{F_1},\z,U_{F_n}\in \ms U$. Conversely, assume that $U_F\in\ms U$. This implies that $\bigwedge(U_F)\leq \star$. By definition, $1\in F^\sigma$, for each $\sigma\in U_F$, and thus 
$$
1\in \bigcap_{\sigma\in U_F}F^\sigma=:F^{\bigwedge(U_F)}\subseteq F^\star.
$$
The conclusion is now clear. 
\end{proof}
\begin{cor}
Preserve the notation given at the beginning of the present section. The set $\insss(A)$ of all the (semi)star operations of finite type, endowed with the subspace Zariski topology, is a spectral space. 
\end{cor}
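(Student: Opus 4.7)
The plan is to exhibit $\insss(A)$ as a \emph{closed} subspace of $\insfinss(A)$ and to invoke the general fact that any closed subspace of a spectral space, equipped with the subspace topology, is itself spectral (closed subsets are trivially proconstructible, so they inherit all the Hochster axioms). Given that $\insfinss(A)$ is spectral by Theorem \ref{spectral}, this reduces the corollary to identifying the right topological description of the subset of (semi)star operations inside $\insfinss(A)$.

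The first step is to translate the defining condition $A=A^\star$ into topological terms. Since the containment $A\subseteq A^\star$ always holds, the equality is equivalent to the requirement that no element of $K\setminus A$ belong to $A^\star$. Using the scalar axiom $(kF)^\star=kF^\star$, the condition $x\in A^\star$ rewrites as $1\in x^{-1}A^\star=(x^{-1}A)^\star$, that is, as $\star\in V_{x^{-1}A}$. Because $x^{-1}A$ is a cyclic, hence finitely generated, fractional ideal of $A$, the set $U_{x^{-1}A}:=V_{x^{-1}A}\cap\insfinss(A)$ is a subbasic open subset of $\insfinss(A)$ by Remark \ref{basic}(\ref{basic:ft}).

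Combining the two observations, one obtains
$$\insss(A)=\insfinss(A)\setminus\bigcup_{x\in K\setminus A}U_{x^{-1}A},$$
which displays $\insss(A)$ as the complement of an open subset of $\insfinss(A)$, hence as a closed subspace. Applying Theorem \ref{spectral} together with the closed-subspace principle then yields the corollary.

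I do not anticipate any substantive obstacle here: once the subbasis of $\insfinss(A)$ described in Remark \ref{basic}(\ref{basic:ft}) is recorded and the condition $A=A^\star$ is rewritten via the scalar axiom, the identification of $\insss(A)$ with a closed subset is a matter of bookkeeping, and the passage from closed subspace to spectral subspace is a standard general fact about spectral spaces.
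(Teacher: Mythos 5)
Your argument is correct, but it is genuinely different from the one in the paper. You observe that $\star\in\insfinss(A)$ fails to be a (semi)star operation exactly when some $x\in K\setminus A$ lies in $A^\star$, i.e.\ when $\star\in U_{x^{-1}A}$, so that $\insss(A)=\insfinss(A)\setminus\bigcup_{x\in K\setminus A}U_{x^{-1}A}$ is closed in $\insfinss(A)$; you then invoke the standard fact that a closed (hence proconstructible) subspace of a spectral space is spectral. Both steps hold: every closed set is an intersection of complements of quasi-compact open sets, hence is closed in the constructible topology, and such subsets of spectral spaces are spectral with the induced topology. The paper instead re-runs the ultrafilter criterion of \cite{Fi} used in Theorem \ref{spectral}: it takes the subbasis $\{U'_F:=\insss(A)\cap U_F : F\in\mf f(A)\}$ and checks that the candidate point $\bigvee(\{\bigwedge(U'_F):U'_F\in\ms U\})$ is again a (semi)star operation, using that both the infimum of a nonempty $U'_F$ and the supremum of a family of (semi)star operations remain (semi)star operations. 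Your route is shorter and yields the extra information that $\insss(A)$ is closed in $\insfinss(A)$ (so, for instance, it contains the closed point $d$ and is stable under the specialization order $\star'\leq\star$, consistent with Proposition \ref{chiusura}); its only cost is the appeal to a general permanence property of spectral spaces that the paper does not record in its background section, whereas the paper's proof stays entirely within the machinery it has already set up.
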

\begin{proof}
For every $F\in \mf f(A)$, let $U'_F:=\insss(A)\cap U_F$. Clearly, a subbasis of the subspace topology of $\insss(A)$ is $\{U'_F:F\in \mf f(A)   \}$. Moreover, if $U'_F\neq \emptyset$, then its infimum is equal to the infimum of $U_F$, and the supremum of a family of (semi)star operations is still a (semi)star operation. Hence, in the proof of Theorem \ref{spectral}, the map $\star:=\bigvee(\{\bigwedge(U'_F):U'_F\in \ms U\})$ is a (semi)star operation, and thus we can apply the same proof to get the statement.
%For every $F\in \mf f(A)$, let $U'_F:=\insss(A)\cap U_F$. Clearly, a subbasis of the subspace topology of $\insss(A)$ is $\{U'_F:F\in \mf f(A)   \}$. Moreover, if $U'_F\neq \emptyset$, then its infimum is equal to the infimum of $U_F$, and the supremum of a family of (semi)star operations is still a (semi)star operation. Thus, the conclusion follows by applying, mutatis mutandis, the same argument given in the proof of Theorem \ref{spectral}.
\end{proof}
\section{Functorial properties}\label{sect:funct}
Let $A\subseteq B$ be an extension of integral domains, and  let $K$ be the quotient field of $A$. For any semistar operation $\star\in \inssemistar(B)$ we can define a semistar operation $\sigma(\star)\in \inssemistar(A)$ by setting
$$
F^{\sigma(\star)}:=(FB)^\star\cap K,
$$
for every nonzero $A$-submodule $F$ of $K$. Thus the inclusion induces a natural map $\sigma:\inssemistar(B)\longrightarrow \inssemistar(A)$, $\star\mapsto\sigma(\star)$. 
\begin{prop}\label{funtoriale1}
Preserve the notation given at the beginning of the present section. The following statements hold. 
\begin{enumerate}[\hspace{0.6cm}\rm (1)]
\item The map $\sigma$ is continuous.
\item\label{funtoriale1:fintype} If $\star\in \inssemistar(B)$ is of finite type, then so is $\sigma(\star)$; i.e., $\sigma$ restricts to a map $\insfinss(B)\longrightarrow\insfinss(A)$.
\end{enumerate}

\end{prop}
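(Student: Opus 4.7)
The plan is to verify continuity in part (1) by showing that each subbasic open set pulls back to a subbasic open set, and to establish the finite type property in part (2) by a direct finite-generation argument, exploiting the fact that the $B$-module generated by finitely many elements of $FB$ already lives in $HB$ for a finitely generated $A$-submodule $H\subseteq F$.

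For (1), I would fix a nonzero $A$-submodule $F$ of $K$ and compute the preimage of the subbasic open set $V_F^{(A)}=\{\star'\in\inssemistar(A):1\in F^{\star'}\}$. By definition, $\star\in\sigma^{-1}(V_F^{(A)})$ iff $1\in F^{\sigma(\star)}=(FB)^\star\cap K$. Since $1\in K$ automatically, this reduces to $1\in(FB)^\star$. Observing that the quotient field $L$ of $B$ contains $K$ and that $FB$ is a nonzero $B$-submodule of $L$, we obtain the clean identity $\sigma^{-1}(V_F^{(A)})=V_{FB}^{(B)}$, which is a subbasic open set of $\inssemistar(B)$. Continuity follows from this on a subbase.

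For (2), assume $\star$ is of finite type on $B$. Fix $F\in\inssubmod(A)$ and $x\in F^{\sigma(\star)}=(FB)^\star\cap K$. Since $\star$ is of finite type, there exists a finitely generated $B$-submodule $G\subseteq FB$ with $x\in G^\star$. Writing a finite system of generators of $G$ as finite $B$-linear combinations of elements of $F$, I collect those (finitely many) elements of $F$ into a finitely generated $A$-submodule $H\subseteq F$; by construction, $G\subseteq HB$, hence $x\in G^\star\subseteq(HB)^\star$. Intersecting with $K$ gives $x\in(HB)^\star\cap K=H^{\sigma(\star)}$. If $H$ happens to be zero then so is $x$, in which case one may replace $H$ with any nonzero finitely generated $A$-submodule of $F$ (which exists since $F\neq 0$) to remain within $\mf f(A)$. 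This exhibits $x$ as lying in $H^{\sigma(\star)}$ for some $H\in\mf f(A)$ with $H\subseteq F$, proving $\sigma(\star)\in\insfinss(A)$.

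Neither step looks genuinely difficult; the only subtlety worth watching is the interplay between the quotient fields of $A$ and $B$ (which is painless because $K\subseteq L$), and the bookkeeping in part (2) ensuring that the chosen $H$ is a \emph{nonzero} finitely generated $A$-submodule of $F$ so that it qualifies as an element of $\mf f(A)$.
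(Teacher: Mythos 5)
Your proof is correct and follows essentially the same route as the paper's: for (1) the identity $\sigma^{-1}(V_F^{(A)})=V_{FB}^{(B)}$ on the subbase, and for (2) the same bookkeeping of writing finitely many generators of a finitely generated $B$-submodule of $FB$ as $B$-combinations of finitely many elements of $F$ and summing the resulting finitely generated $A$-submodules. Your extra remark about keeping $H$ nonzero so that $H\in\mf f(A)$ is a harmless refinement the paper leaves implicit.
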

\begin{proof}(1)
Let $F\in\inssubmod(A)$. Then we have:
$$
\sigma^{-1}(V_F^{(A)})=\{\star\in\inssemistar(B): \sigma(\star)\in V_F\}=\{\star\in\inssemistar(B): 1\in F^{\sigma(\star)}\}=
$$
$$
=\{\star\in\inssemistar(B): 1\in (FB)^\star\}=V_{FB}^{(B)}.
$$
Since $FB\in\inssubmod(B)$, $\sigma$ is continuous.

(2) Let $I\in \overline{ \mf F}(A)$ and $x\in I^{\sigma(\star)}$; then $x\in(IB)^\star$, and thus there are $y_1,\ldots,y_n\in IB$ such that $x\in(y_1B+\cdots+y_nB)^\star$. For every $y_i$, there is a finitely generated $A$-module $F_i\subseteq I$ such that $y_i\in F_iB$; let $F:=F_1+\cdots+F_n$. Then $F\subseteq I$ is finitely generated (as an $A$-module), and $y_1B+\cdots+y_nB\subseteq FB$; therefore, $x\in(FB)^\star$ and $x\in F^{\sigma(\star)}$. Thus $\sigma(\star)$ is of finite type.
\end{proof}
We note in the following result that the map $\sigma$ defined above exhibits better properties when $A$ and $B$ have the same quotient field.
\begin{prop}\label{prop:funct-over}
We preserve the notation of the beginning of the present section, and suppose in addition that $B$ is an overring of $A$. If $\star\in \inssemistar(B)$,  the following statements hold. 
\begin{enumerate}[\hspace{0.6cm}\rm (1)]
\item $\sigma(\star)|_{\inssubmod(B)}=\star$;
\item $\sigma$ is injective;
\item $\sigma(\star)$ is of finite type if and only if so is $\star$.
\end{enumerate}
\end{prop}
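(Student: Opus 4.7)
The plan is to exploit the hypothesis that $A$ and $B$ share the quotient field $K$, which means $\inssubmod(B)\subseteq \inssubmod(A)$ and, crucially, $F^\star\subseteq K$ already lies inside $K$ whenever $F\in\inssubmod(B)$.

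For (1), I would fix $F\in\inssubmod(B)$ and compute $F^{\sigma(\star)}=(FB)^\star\cap K$. Since $F$ is already a $B$-module, $FB=F$, and since $F^\star\in\inssubmod(B)\subseteq K$, intersecting with $K$ does nothing. So $F^{\sigma(\star)}=F^\star$, which is exactly the claim.

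Part (2) is then immediate: if $\sigma(\star)=\sigma(\star')$, then (1) applied to both $\star$ and $\star'$ gives $\star=\sigma(\star)|_{\inssubmod(B)}=\sigma(\star')|_{\inssubmod(B)}=\star'$.

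For (3), the forward implication ($\star$ of finite type $\Rightarrow$ $\sigma(\star)$ of finite type) is already Proposition \ref{funtoriale1}(\ref{funtoriale1:fintype}). For the converse, assume $\sigma(\star)$ is of finite type, fix $F\in\inssubmod(B)$ and $x\in F^\star$. Using (1), $x\in F^{\sigma(\star)}$, so by finite type there is a finitely generated $A$-submodule $G\subseteq F$ of $K$ with $x\in G^{\sigma(\star)}=(GB)^\star\cap K=(GB)^\star$. Now $GB$ is a finitely generated $B$-submodule (the finite set of $A$-generators of $G$ also generates $GB$ over $B$) contained in $FB=F$, and this exhibits $x$ as an element of $H^\star$ for some $H\in\mf f(B)$ with $H\subseteq F$. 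Hence $\star$ is of finite type.

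No step here looks genuinely hard; the only pitfall to watch for is keeping track of which ring a module is ``finitely generated over''. The equality $FB=F$ for $B$-modules and the containment $F^\star\subseteq K$ (so that the intersection with $K$ in the definition of $\sigma(\star)$ is inert on $\inssubmod(B)$) are what make all three statements work, and both rely critically on $A$ and $B$ having the same quotient field — without this hypothesis, $\sigma$ need not even be injective.
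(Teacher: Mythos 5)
Your proposal is correct and follows essentially the same route as the paper: part (1) via $FB=F$ for $B$-modules (with the intersection with $K$ being automatic), part (2) as an immediate consequence, and part (3) by combining Proposition \ref{funtoriale1}(2) with the observation that a finitely generated $A$-submodule $G\subseteq F$ yields the finitely generated $B$-submodule $GB\subseteq F$.
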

\begin{proof}
The first point is straighforward, since if $I$ is a $B$-module, then $I^{\sigma(\star)}=(IB)^\star=I^\star$; the second follows immediately from the first one.

For the third one, suppose $\sigma(\star)$ is of finite type and let $x\in I^\star$. Then $x\in I^{\sigma(\star)}$, and thus there is a finitely generated $A$-module $F\subseteq I$ such that $x\in F^{\sigma(\star)}$. Hence, $x\in(FB)^\star$, and $\star$ is of finite type. By Proposition \ref{funtoriale1}(2), the proof is complete. 
\end{proof}
\begin{oss}
If the quotient fields of $A$ and $B$ are different, then $\sigma$ could fail to be injective: the simplest example is obtained when $B$ is not a field but contains the quotient field $K$ of $A$, since for every $\star\in\inssemistar(B)$ and every $I\in\inssubmod(A)$, $I^{\sigma(\star)}=(IB)^\star\cap K=K^\star\cap K=K$. The injectivity is not preserved even if we suppose $B\cap K=A$: for example, let $A$ be a rank one discrete valuation ring, $L$ a field containing $K$ and define $B$ as the integral closure of $A$ in $L$. If $B$ is not local (e.g., if $A=\mathbb{Z}_{(p)}$ and $L$ is an algebraic extension of $\mathbb{Q}$ where $p$ splits), then $\sigma$ is not injective, since $|\inssemistar(A)|=2$ (\cite[Proposition 4.2]{pi}) while $B$ admits at least 3 semistar operation of finite type: the identity, $\wedge_{\{L\}}$ and $\wedge_{\{B_P\}}$, where $P$ is a maximal ideal of $B$.

In the same way, $\sigma(\star)$ could be of finite type even if $\star$ is not and $B\cap K=A$: for example, let $Z$ be an indeterminate over $\mathbb{C}$, set $A=\mathbb{C}[Z]$ and let $B$ be the ring of all entire functions. Then the map $\star$ defined by $$F\mapsto F^\star:=\bigcap_{\alpha\in\mathbb{C}}FB_{(Z-\alpha)}\qquad \mbox{ for any }F\in\inssubmod(B)$$ is a (semi)star operation on $B$ which is not of finite type. Indeed, since $B$ is a B\'ezout domain \cite{entire}, all finitely generated ideals are quasi-$\star$-ideals but, if $\f b\subsetneq B$ is \emph{free ideal} (i.e., the functions belonging to $\f b$ have no common zeros), then clearly $\f b^\star=B$, while for any finite subset $\{f_1,\z,f_n\}$ of $\f b$, we have $((f_1,\z,f_n)B)^\star= (f_1,\z,f_n)B\subseteq \f b\subsetneq B$. This shows that $\star$ is not of finite type.  Since $B\cap\mathbb{C}(Z)=A$, $\sigma(\star)$ is a (semi)star operation on $A$, and it is not hard to see that it is the identity, a!
 nd thus of finite type.
\end{oss}
\begin{prop}
We preserve the notation of the beginning of the present section and suppose that $B$ is an overring of $A$. Then 
$$
\sigma(\inssemistar(B))=\{\star\in\inssemistar(A): \star\geq\wedge_{\{B\}}\}=\{\star\in\inssemistar(A): B\subseteq A^\star\}.
$$
\end{prop}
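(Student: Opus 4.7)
The plan is first to prove the equivalence of the two right-hand descriptions, and then to establish that $\sigma(\inssemistar(B))$ coincides with either of them.

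For the right-hand equality, the implication $\star\geq\wedge_{\{B\}}\Rightarrow B\subseteq A^\star$ is immediate by evaluating at $F=A$, since $A^{\wedge_{\{B\}}}=AB=B$. The converse relies on the observation that $F^\star$ is canonically an $A^\star$-module for every $F\in\inssubmod(A)$: for any nonzero $x\in F^\star$ we have $xA\subseteq F^\star$, hence $xA^\star=(xA)^\star\subseteq (F^\star)^\star=F^\star$. Thus if $B\subseteq A^\star$ then every $F^\star$ is a $B$-module, so $FB\subseteq F^\star$, which is exactly $\star\geq\wedge_{\{B\}}$. The same remark makes the inclusion $\sigma(\inssemistar(B))\subseteq\{\star:B\subseteq A^\star\}$ a one-line computation: $A^{\sigma(\star)}=(AB)^\star\cap K=B^\star\cap K\supseteq B$ for any $\star\in\inssemistar(B)$.

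The reverse inclusion is the main content. Given $\star\in\inssemistar(A)$ with $B\subseteq A^\star$, I would construct $\star'\in\inssemistar(B)$ by restriction, setting $G^{\star'}:=G^\star$ for every $G\in\inssubmod(B)\subseteq\inssubmod(A)$. Since $B\subseteq A^\star$, the $A^\star$-module remark above guarantees that $G^\star$ is itself a nonzero $B$-submodule of $K$, so $\star'$ indeed takes values in $\inssubmod(B)$; and because $A$ and $B$ share the quotient field $K$, the four semistar axioms for $\star'$ transfer verbatim from those of $\star$. It remains to check that $\sigma(\star')=\star$. For $F\in\inssubmod(A)$ we have $F^{\sigma(\star')}=(FB)^{\star'}\cap K=(FB)^\star$ (using that $(FB)^\star\subseteq K$ automatically), so the problem reduces to verifying the identity $(FB)^\star=F^\star$. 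The inclusion $F^\star\subseteq(FB)^\star$ is clear from $F\subseteq FB$, while the reverse follows from $FB\subseteq F\cdot A^\star\subseteq F^\star$ (again by the $A^\star$-module property) and idempotence.

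The main, and essentially the only, obstacle is the lemma that $F^\star$ is an $A^\star$-module. Although its proof is short, it is the common hinge for both the equivalence of the two right-hand descriptions and for the surjectivity step; once it is granted, the rest of the argument is routine bookkeeping with the definition of $\sigma$.
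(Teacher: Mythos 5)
Your proposal is correct and follows essentially the same route as the paper: both arguments hinge on the observation that $F^\star$ is an $A^\star$-module (hence a $B$-module when $B\subseteq A^\star$) and both realize the preimage under $\sigma$ by restricting $\star$ to $\inssubmod(B)$. The only difference is organizational — you prove the equality of the two right-hand sets first and then the two inclusions with the image, while the paper runs a single cycle of three inclusions — which does not change the substance.
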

\begin{proof}
If $\star\in\sigma(\inssemistar(B))$, then $\star=\sigma(\star')$, for some $\star'\in \inssemistar(B)$, and, for every $I\in\inssubmod(A)$,
$$
I^{\sigma(\star')}=(IB)^\star\supseteq IB=I^{\wedge_{\{B\}}}
$$
and thus $\star\geq\wedge_{\{B\}}$.

If $\star\geq\wedge_{\{B\}}$, then $A^\star\supseteq A^{\wedge_{\{B\}}}=AB=B$.

 If $B\subseteq A^\star$ and  $I\in \inssubmod(A)$, then $I^\star$ is a $B$-module, since for every $b\in B$ we have $bI^\star=(bI)^\star\subseteq(A^\star I)^\star=(AI)^\star\subseteq I^\star$. Hence, $\star|_{\inssubmod(B)}\in\inssemistar(B)$ and it is easy to see that $\star=\sigma(\star)$.
\end{proof}

\begin{prop}
We preserve the notation of the beginning of the present section and assume that $B$ is an overring of $A$. Then the map $\sigma:\inssemistar(B)\longrightarrow\inssemistar(A)$ is a topological embedding. 
\end{prop}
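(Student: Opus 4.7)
The plan is to use the three facts already established: $\sigma$ is continuous (Proposition \ref{funtoriale1}(1)), $\sigma$ is injective (Proposition \ref{prop:funct-over}(2)), and $\sigma^{-1}(V_F^{(A)})=V_{FB}^{(B)}$ for every $F\in\inssubmod(A)$. What remains is to show that $\sigma$ is relatively open, i.e., that for every open $U\subseteq\inssemistar(B)$, the image $\sigma(U)$ is open in $\sigma(\inssemistar(B))$ endowed with the subspace topology. Since $\sigma$ is injective, it commutes with finite intersections and arbitrary unions of subsets, so it suffices to check this property on subbasic open sets $V_F^{(B)}$ with $F\in\inssubmod(B)$.

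So fix $F\in\inssubmod(B)$. The crucial observation is that $F$, being a nonzero $B$-submodule of $K$, is also a nonzero $A$-submodule of $K$, and in this case $FB=F$. Therefore $V_F^{(A)}$ is a subbasic open set of $\inssemistar(A)$, and the computation already carried out in the proof of Proposition \ref{funtoriale1}(1) gives
$$\sigma^{-1}(V_F^{(A)})=V_{FB}^{(B)}=V_F^{(B)}.$$
Combining this identity with the injectivity of $\sigma$, and using the elementary set-theoretic fact that $f(f^{-1}(W))=W\cap f(X)$ for any function $f:X\to Y$ and any $W\subseteq Y$, one obtains
$$\sigma(V_F^{(B)})=\sigma\bigl(\sigma^{-1}(V_F^{(A)})\bigr)=V_F^{(A)}\cap\sigma(\inssemistar(B)),$$
which is by definition open in the subspace topology on $\sigma(\inssemistar(B))$. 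Hence $\sigma$ maps each subbasic open to a relatively open set, and by the remark above, $\sigma$ is relatively open on all of $\inssemistar(B)$.

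Putting everything together, $\sigma$ is a continuous injection that is open onto its image, hence a topological embedding. I do not expect any serious obstacle here: the whole argument really hinges on the observation $FB=F$ when $F$ is already a $B$-module, which makes the preimage formula for subbasic opens coincide with the subbasic opens themselves on the source. The only mild subtlety to keep in mind is that one must restrict attention to $F\in\inssubmod(B)$ when producing the subbasis on the source, but use $F\in\inssubmod(A)$ (with the same underlying $F$) when viewing $V_F^{(A)}$ in the target; the hypothesis that $A$ and $B$ share the quotient field $K$ is exactly what makes this identification legitimate.
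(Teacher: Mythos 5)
Your proposal is correct and follows essentially the same route as the paper: reduce to subbasic opens, observe that a nonzero $B$-submodule $F$ of $K$ is also a nonzero $A$-submodule with $FB=F$, and conclude $\sigma(V_F^{(B)})=V_F^{(A)}\cap\sigma(\inssemistar(B))$. The paper phrases the key identity as $F^{\sigma(\star)}=F^\star$ for $B$-modules (Proposition \ref{prop:funct-over}(1)) rather than via the preimage formula, but this is the same computation.
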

\begin{proof}
Let $F\in\inssubmod(B)$. By Propositions \ref{funtoriale1}(1) and \ref{prop:funct-over}(2), it is enough to show that $\sigma(V_F^{(B)})$ is open in $\sigma(\inssemistar(B))$.

Since $B$ is an overring of $A$, then $F$ is an $A$-module and thus is defined the open set $V_F^{(A)}$. But, since $F^\star=F^{\sigma(\star)}$ for every $\star\in\inssemistar(B)$ (Proposition \ref{prop:funct-over}), we have $\sigma(V_F^{(B)})=V_F^{(A)}\cap\sigma(\inssemistar(B))$, which is an open set in $\sigma(\inssemistar(B))$. The proof is now complete.
\end{proof}
\section{Spaces of local rings}
Let $A\subseteq B$ be a ring extension. Let ${\rm L}(B|A)$ denote the set (possibly empty) of the local subrings of $B$ containing $A$. We can define on ${\rm L}(B|A)$ a topology just by taking as a basis of open sets the collection of the sets of the form ${\rm L}(B|A[F])$, where $F$ runs in the collection of all the finite subsets of $B$. We will cal this topology \emph{the Zariski topology on ${\rm L}(B|A)$}. When $A$ is an integral domain and $B$ is the quotient field of $A$, then ${\rm L}(A):={\rm L}(B|A)$ is simply the space of all the local overrings of $A$. By \cite[Lemma 2.4]{do-fe-fo}, there is a canonical topological embedding $$\iota: \spec(A)\longrightarrow {\rm L}(A)\qquad \f p\mapsto A_{\f p}.$$

Moreover, ${\rm L}(A)$ is obviously a subset of $\rm{Over}(A)$, and the inclusion ${\rm L}(A)\longrightarrow\Over(A)$ is easily seen to be a topological embedding, when the two sets are endowed with the respective Zariski topologies.
\begin{lemma}\label{lemma1}
Let $A\subseteq B$ be a ring extension and consider the canonical map $\lambda:{\rm L}(B|A)\longrightarrow \spec(A)$ sending a local ring $C\in {\rm L}(B|A)$ with maximal ideal $\f m_C$ into the prime ideal $\f m_C\cap A$ of $A$.
\begin{enumerate}[\hspace{0.6cm}\rm (1)]
\item $\lambda$ is continuous.
\item If $A$ is an integral domain and $B$ is the quotient field of $A$, then $\lambda$ is a topological retraction.
\end{enumerate} 
\end{lemma}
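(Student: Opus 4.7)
For part (1), I would work with the standard basis $\{D(f):f\in A\}$ of the Zariski topology on $\spec(A)$ and compute, for a fixed $f\in A$,
$$
\lambda^{-1}(D(f))=\{C\in {\rm L}(B|A):f\notin \f m_C\cap A\}=\{C\in {\rm L}(B|A):f\notin \f m_C\},
$$
the second equality using $f\in A\subseteq C$. Since $C$ is a local subring of $B$, the condition $f\notin \f m_C$ is equivalent to $f$ being a unit in $C$, and this forces $f^{-1}\in C\subseteq B$. Hence the preimage is empty if $f$ is not a unit in $B$, and otherwise coincides with the basic open set ${\rm L}(B|A[f^{-1}])$. Either way it is open, which gives continuity.

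For part (2), under the hypothesis that $B=K$ is the quotient field of the integral domain $A$, my plan is to produce a continuous section of $\lambda$. The natural candidate is the topological embedding $\iota:\spec(A)\longrightarrow{\rm L}(A)$, $\f p\mapsto A_{\f p}$, recalled just before the Lemma from \cite[Lemma 2.4]{do-fe-fo}. A direct verification shows $\lambda(\iota(\f p))=\f p A_{\f p}\cap A=\f p$, so that $\lambda\circ\iota=\ude_{\spec(A)}$. Combined with the continuity of $\lambda$ from (1) and the fact that $\iota$ is an embedding, this identifies $\iota(\spec(A))$ with a subspace of ${\rm L}(A)$ on which $\lambda$ restricts (modulo $\iota$) to the identity; this is exactly the notion of topological retraction already used in Proposition \ref{quoziente-canonico}(2).

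The argument is essentially a bookkeeping exercise, and I do not anticipate a substantial obstacle. The only point that requires a moment's care is that the basic open sets of ${\rm L}(B|A)$ are indexed by finite subsets of $B$, not of $A$; this is precisely why in part (1) one must observe that, whenever $\lambda^{-1}(D(f))$ is non-empty, the inverse $f^{-1}$ genuinely lives in $B$, so that ${\rm L}(B|A[f^{-1}])$ is in fact a basic open set of the Zariski topology on ${\rm L}(B|A)$.
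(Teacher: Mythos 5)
Your proof is correct and follows essentially the same route as the paper, which simply records the identity $\lambda^{-1}(D(f))={\rm L}(B|A[f^{-1}])$ for $f\in A$ and leaves the verification of $\lambda\circ\iota=\ude_{\spec(A)}$ implicit. Your extra remark about the case $f^{-1}\notin B$ (where the preimage is empty) is a legitimate point of care that the paper glosses over, but it does not change the argument.
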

\begin{proof}
It suffices to note that 
$
\lambda^{-1}(D(f))={\rm L}(B|A[f^{-1}]),
$
for any element $f\in A$.
\end{proof}
\begin{prop}\label{lover-comp}
Let $A$ be an integral domain, $Y$ be a nonempty subspace of ${\rm L}(A)$ and assume that $\wedge_Y$ is a semistar operation of finite type. If $\lambda:{\rm L}(A)\longrightarrow\spec(A)$ is the canonical continuous map, then $\lambda(Y)$ is compact. 
\end{prop}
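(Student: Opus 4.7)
The plan is to directly verify compactness of $\lambda(Y)$ in the Zariski topology of $\spec(A)$ by taking an arbitrary open cover and extracting a finite subcover, using the finite type hypothesis on $\wedge_Y$ as the engine.

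First, I would reduce to covers by distinguished opens. Since $\{D(f):f\in A\}$ is a basis of $\spec(A)$ (or by Alexander's subbase theorem, exactly as in the proof of Proposition \ref{prop:locfin}), it suffices to show that every cover $\lambda(Y)\subseteq\bigcup_{i\in I}D(f_i)$ with $f_i\in A$ admits a finite subcover. The translation step is the key: for $B\in Y$ with maximal ideal $\f m_B$, one has $\lambda(B)\in D(f_i)$ iff $f_i\notin \f m_B\cap A$ iff $f_i$ is a unit in $B$, which is equivalent to $f_iB=B$ and hence to $1\in f_iB$.

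Now set $\f a:=\sum_{i\in I}f_iA$. The covering hypothesis implies that for every $B\in Y$ at least one $f_i$ is a unit in $B$, so $\f aB\supseteq f_iB=B$ for some $i$, giving $1\in\f aB$. Intersecting over $B\in Y$,
$$
1\in\bigcap_{B\in Y}\f aB=\f a^{\wedge_Y}.
$$
This is the point where the hypothesis that $\wedge_Y$ is of finite type enters: there exists a finitely generated ideal $G\subseteq \f a$ with $1\in G^{\wedge_Y}$, and since $G$ is finitely generated and contained in $\sum_{i\in I}f_iA$, it is contained in $\sum_{j=1}^nf_{i_j}A$ for some finite subset $\{i_1,\ldots,i_n\}\subseteq I$. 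Consequently
$$
1\in\Bigl(\sum_{j=1}^n f_{i_j}A\Bigr)^{\wedge_Y}=\bigcap_{B\in Y}\sum_{j=1}^n f_{i_j}B.
$$

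Finally, I would use locality of $B$ to finish: for each $B\in Y$, the ideal $\sum_{j=1}^n f_{i_j}B$ of $B$ contains $1$, hence equals $B$; since $B$ is local, at least one $f_{i_j}$ must lie outside $\f m_B$, i.e., $\lambda(B)\in D(f_{i_j})$. Therefore $\lambda(Y)\subseteq D(f_{i_1})\cup\cdots\cup D(f_{i_n})$, a finite subcover, and $\lambda(Y)$ is compact. No step is genuinely hard; the only subtle point is picking the right fractional ideal ($\f a=\sum f_iA$, not $\sum f_i^{-1}A$) so that the condition $1\in\f aB$ correctly encodes the existence of a unit $f_i$ in $B$, and then combining finite generation with the local-ring observation that a unit ideal in a local ring requires one generator to be a unit.
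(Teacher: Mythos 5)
Your proof is correct and follows essentially the same route as the paper's: reduce to a cover by distinguished opens $D(f_i)$, translate membership in $D(f_i)$ into $f_i$ being a unit of the local ring $B$, form the ideal $\f a=\sum_i f_iA$ so that $1\in\f a^{\wedge_Y}$, invoke the finite type hypothesis to pass to finitely many $f_{i_j}$, and use locality of each $B$ to extract the finite subcover. No discrepancies.
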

\begin{proof}
Let $\{D(f_i):i\in I\}$ ($f_i\in A$) be a collection of basic open sets of $\spec(A)$ such that $\bigcup\{D(f_i):i\in I\}\supseteq \lambda(Y)$. Thus for any $B\in Y$ there is an element $i_B\in I$ such that $f_{i_B}\notin \f m_B\cap A$. Since $B$ is local and $f_{i_M}\in A\subseteq B$, we have $f_{i_B}^{-1}\in B$. Thus, if $\f a$ is the ideal of $A$ generated by the set $\{f_i:i\in I\}$, we infer immediately that $1\in \f aB$, for any $B\in Y$, i.e., $1\in \f a^{\wedge_Y}$. Since $\wedge_Y$ is of finite type, there is a finitely generated ideal $\f b$ of $A$ contained in $\f a$ such that $1\in \f b^{\wedge_Y}$. Of course, we can assume, without loss of generality, that a finite set of generators of $\f b$ is $\{f_j:j\in J\}$, where $J$ is a suitable finite subset of $I$. It suffices to show that $\bigcup\{D(f_j):j\in J\}\supseteq\lambda(Y)$. If, for some $B\in Y$, we had  $(f_j:j\in J)\subseteq \f m_B\cap A$, it would follow that $\f bB\subseteq \f m_B$, against the fact tha!
 t $1\in \f b^{\wedge_Y}$. The proof is now complete.
\end{proof}
The following corollary is an immediate consequence of Propositions \ref{compact-fintype} and \ref{lover-comp}.
\begin{cor}
Preserve the notation of Proposition \ref{lover-comp} and let $Y$ be a subspace of ${\rm L}(A)$ such that $\lambda|_Y$ is a topological embedding. Then $\wedge_Y$ is a semistar operation of finite type if and only if $Y$ is compact. 
\end{cor}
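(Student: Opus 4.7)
The plan is to split the biconditional and cite results that are already in place.

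For the implication ``$Y$ compact $\Longrightarrow$ $\wedge_Y$ is of finite type'', I would simply invoke Corollary \ref{comp-overrings}. The ambient space here is ${\rm L}(A)$, but since the inclusion ${\rm L}(A)\hookrightarrow\Over(A)$ is a topological embedding (as noted at the start of Section 4), a subspace $Y$ of ${\rm L}(A)$ which is compact in its own topology remains compact when regarded as a subspace of $\Over(A)$. Corollary \ref{comp-overrings} then yields that $\wedge_Y$ is of finite type. Note this direction does not use the hypothesis that $\lambda|_Y$ is a topological embedding.

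For the converse ``$\wedge_Y$ of finite type $\Longrightarrow$ $Y$ compact'', I would apply Proposition \ref{lover-comp} directly to conclude that $\lambda(Y)$ is a compact subspace of $\spec(A)$. At this point the hypothesis on $\lambda|_Y$ enters: since $\lambda|_Y:Y\to\lambda(Y)$ is a homeomorphism (being a topological embedding with image $\lambda(Y)$), compactness transfers from $\lambda(Y)$ back to $Y$. This gives the desired conclusion.

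Since both directions are short consequences of previously established results, there is no real obstacle; the only thing to be careful about is the precise use of the embedding hypothesis, which is needed only for the ``finite type $\Rightarrow$ compact'' direction in order to pull back compactness from $\spec(A)$ to $Y$.
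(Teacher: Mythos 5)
Your proof is correct and matches the paper's intended argument: the paper simply declares the corollary an immediate consequence of Propositions \ref{compact-fintype} (via Corollary \ref{comp-overrings}) and \ref{lover-comp}, which is exactly the two-step decomposition you carry out. Your additional remarks --- that compactness of $Y$ passes unchanged between ${\rm L}(A)$ and $\Over(A)$, and that the embedding hypothesis is needed only to pull compactness back from $\lambda(Y)$ to $Y$ --- are accurate and just make explicit what the paper leaves implicit.
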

The following Corollary is an improvement of \cite[Corollary 4.6]{fohu}.
\begin{cor}\label{fin-spectral-car}
Let $A$ be an integral domain and let $\Delta\subseteq \spec(A)$. Then $s_\Delta$ is a semistar operation of finite type if and only if $\Delta$ is compact. 
\end{cor}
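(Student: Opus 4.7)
The plan is to reduce the statement to the Corollary immediately preceding it by taking $Y:=\iota(\Delta)=\{A_{\f p}:\f p\in\Delta\}\subseteq {\rm L}(A)$, where $\iota:\spec(A)\longrightarrow{\rm L}(A)$ is the canonical embedding recalled at the start of Section 4. By the definition of spectral semistar operation (Example \ref{ex:background-semistar}(h)), one has $s_\Delta=\wedge_Y$, so the desired equivalence becomes: $\wedge_Y$ is of finite type if and only if $\Delta$ is compact.

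To invoke the preceding Corollary, I need to check that $\lambda|_Y$ is a topological embedding. By Lemma \ref{lemma1}(2), $\lambda:{\rm L}(A)\longrightarrow\spec(A)$ is a topological retraction, with $\iota$ as a section; in particular $\lambda\circ\iota=\ude_{\spec(A)}$. Restricting to $Y=\iota(\Delta)$, the map $\lambda|_Y:Y\longrightarrow\Delta$ is therefore a continuous bijection whose inverse is $\iota|_\Delta:\Delta\longrightarrow Y$, and $\iota|_\Delta$ is itself an embedding since $\iota$ is. Hence $\lambda|_Y$ is a homeomorphism onto $\Delta$, which in particular makes it a topological embedding.

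Applying the previous Corollary to $Y$ gives: $s_\Delta=\wedge_Y$ is of finite type if and only if $Y$ is compact. Since $\lambda|_Y:Y\longrightarrow\Delta$ is a homeomorphism, $Y$ is compact exactly when $\Delta$ is compact, and this completes the equivalence.

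There is no real obstacle here; the whole argument is a two-line transport along the retraction $\lambda$. The only thing to be careful about is ensuring that the subspace topology of $\Delta$ in $\spec(A)$ really matches the topology transported from $Y\subseteq{\rm L}(A)$, which is guaranteed precisely by the fact that $\iota$ (and hence $\iota|_\Delta$) is a topological embedding, as recorded in the discussion preceding Lemma \ref{lemma1}.
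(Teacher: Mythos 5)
Your argument is correct and is essentially the paper's own proof: both reduce to the preceding Corollary by taking $Y=\iota(\Delta)$ and observing that $\lambda$ restricts to a homeomorphism of $\iota(\spec(A))$ onto $\spec(A)$, so that $\lambda|_Y$ is an embedding and compactness transports between $Y$ and $\Delta$. You have merely written out the details that the paper leaves implicit.
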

\begin{proof}
Apply the previous Corollary, keeping in mind that the restriction of $\lambda:{\rm L}(A)\longrightarrow\spec(A)$ to the set $\iota(\spec(A))$ is clearly an homeomorphism (cf. \cite[Lemma 2.4]{do-fe-fo}). 
\end{proof}

\begin{prop}\label{valuations}
Let $A$ be an integral domain and $Y$ be a nonempty collection of valuation overrings of $A$. Then, $\wedge_Y$ is of finite type if and only if $Y$ is a compact subspace of ${\rm L}(A)$.
\end{prop}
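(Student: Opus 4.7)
The proof splits into the two implications. For \emph{compact implies finite type}, I would invoke the topological embedding ${\rm L}(A)\hookrightarrow\Over(A)$ noted at the beginning of Section 4: compactness of $Y$ in ${\rm L}(A)$ transfers to its image in $\Over(A)$, and Corollary \ref{comp-overrings} immediately yields that $\wedge_Y$ is of finite type.

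For the converse, assume $\wedge_Y$ is of finite type, and apply Alexander's subbasis theorem. A convenient subbasis for the topology on $Y$ is $\{\Omega_x:x\in K\}$, with $\Omega_x:=\{V\in Y:x\in V\}$, obtained as the trace on $Y$ of the subbasis $\{{\rm L}(K|A[x]):x\in K\}$ of ${\rm L}(A)$. Let $\{\Omega_{x_i}:i\in I\}$ cover $Y$; after discarding any indices with $x_i=0$ (for which $\Omega_{x_i}=Y$), I may assume each $x_i\in K\setminus\{0\}$, and the goal is to extract a finite subcover.

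The key step uses the finite type hypothesis to prune $I$. Set $J:=\sum_{i\in I}x_i^{-1}A$, a nonzero $A$-submodule of $K$. Since each $V\in Y$ contains some $x_i$, we have $1=x_i\cdot x_i^{-1}\in V\cdot J=JV$, so $1\in\bigcap_{V\in Y}JV=J^{\wedge_Y}$. The finite-type property then delivers some $G\in\mf f(A)$ with $G\subseteq J$ and $1\in G^{\wedge_Y}$; writing each generator of $G$ as a finite $A$-linear combination of the $x_i^{-1}$'s and collecting the indices appearing, one obtains a finite $I_0\subseteq I$ with $G\subseteq\sum_{i\in I_0}x_i^{-1}A$, whence
$$
1\in\sum_{i\in I_0}x_i^{-1}V\qquad\mbox{for every }V\in Y.
$$

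The final step is where the valuation hypothesis becomes essential. Writing $1=\sum_{i\in I_0}x_i^{-1}v_i$ with $v_i\in V$ and letting $v$ denote the valuation attached to $V$, the non-archimedean inequality gives $0=v(1)\geq\min_{i\in I_0}v(x_i^{-1}v_i)$, so some $j\in I_0$ satisfies $v(x_j^{-1})\leq-v(v_j)\leq 0$, i.e., $x_j\in V$ and $V\in\Omega_{x_j}$. Thus $\{\Omega_{x_i}:i\in I_0\}$ is a finite subcover, and Alexander's theorem yields compactness of $Y$. The main obstacle—and the reason the analogous statement for general overrings can only be conjectured in the paper—is precisely this last step: for an arbitrary overring $R$, nothing guarantees that a representation $1=\sum_{i\in I_0}x_i^{-1}r_i$ with $r_i\in R$ forces a single $x_j$ to lie in $R$.
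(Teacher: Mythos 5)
Your proof is correct and follows essentially the same route as the paper's: the forward implication via the embedding ${\rm L}(A)\hookrightarrow\Over(A)$ together with Corollary \ref{comp-overrings}, and the converse via Alexander's subbasis theorem, using the finite type hypothesis to pass from the submodule generated by all the $x_i^{-1}$ to a finitely generated one and hence to a finite index set $I_0$, and then the valuation hypothesis to show that $\{\Omega_{x_i}:i\in I_0\}$ still covers $Y$. The only cosmetic difference is in the last step, which the paper runs as a contradiction (if no $x_j$ lies in $V$, then every $x_j^{-1}$ lies in the maximal ideal $\f m$ of $V$, forcing $1\in GV\subseteq\f m$), while you use the ultrametric inequality directly; these are the same argument.
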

\begin{proof}
The sufficient condition is proved in \cite[Theorem 4.13]{fifolo2}. It follows also from Corollary \ref{comp-overrings}, since {\rm L}(A) is a subspace of $\Over(A)$.

Conversely, assume that $\wedge_Y$ is of finite type and let $\mathcal U$ be an open cover of $Y$. Clearly, a subbasis of open sets of $Y$, as a subspace of ${\rm L}(A)$, consists of the sets of the form $B_f:=\{V\in Y:f\in V \}$, for any element $f$ of the quotient field $K$ of $A$. Thus, by Alexander's subbasis Theorem, we can assume that $\mathcal U=\{B_{f_i}:i\in I\}$ and that every $f_i$ is nonzero. If $F$ is the $A$-submodule of $K$ generated by the set $\{f_i^{-1}:i\in I\}$, then, by definition, $1\in F^{\wedge_Y}$ and, since $\wedge_Y$ is of finite type, there is a finitely generated $A$-submodule $G$ of $F$ such that $1\in G^{\wedge_Y}$. Of course, we can assume that $G=(\{f_j^{-1}:j\in J \})$, where $J$ is some finite subset of $I$.  We claim that $\{B_{f_j}:j\in J\}$ is a (finite) subcover of $\mathcal U$. If not, there is a valuation domain $V\in Y$ such that $f_j\notin V$, for any $j\in J$, and hence $f_j^{-1}$ is an element of the maximal ideal $\f m$ of $V$, f!
 or
  any $j\in J$. Since $1\in G^{\wedge_Y}$, we infer, in particular, that $1\in GV\subseteq \f m$, a contradiction. The proof is now complete. 
\end{proof}
\begin{oss}
If $Y$ is the space of all the valuation overrings of an integral domain, then $\wedge_Y=b$ is the integral closure. It is well known (see e.g. \cite[Appendix 4]{zar-sam} or \cite[Section 6.8]{sw-hun}) that an element $x\in K$ is in $I^b$ if and only if there is an integer $n$ and there are elements $a_1,\ldots,a_n\in K$ such that $a_i\in I^i$ and $x^n+a_1x^{n-1}+a_2x^{n-2}+\cdots+a_n=0$. Using this equivalence, it is easy to see that $b$ is of finite type, and thus Proposition \ref{valuations} implies that $Y$ is compact.
\end{oss}

In view of Corollaries \ref{comp-overrings} and \ref{fin-spectral-car} and Proposition \ref{valuations}, if $Y$ is either a collection of localization of an integral domain $A$ or a collection of valuation overrings of $A$, then compactness of $Y$ is equivalent to the requirement that $\wedge_Y$ is of finite type. This gives evidence to the following 

{\bf Conjecture.} Let $Y$ be \emph{any} subspace of ${\rm L}(A)$. If $\wedge_Y$ is of finite type, then $Y$ is compact. 

\section{Spectral semistar operations}\label{sect:spectral}
Let $A$ be an integral domain and $K$ be the quotient field of $A$. As we saw in Section 1, a semistar operation $\star$ on $A$ is spectral if there is a nonempty set $Y$ of prime ideals of $A$ such that $\star=s_Y$, i.e.
$$
F^\star=F^{s_Y}:=\bigcap_{\f p\in Y}FA_{\f p} \quad \mbox{ for any } F\in \inssubmod(A).
$$
Following \cite{pi-ta}, two semistar operations $\star_1,\star_2$ on $A$ are called \emph{weakly equivalent} if $\widetilde{\star_1}=\widetilde{\star_2}$. 
\begin{prop}\label{weakly-eq}
Let $A$ be an integral domain and let $Y,Z$ be nonempty subsets of $\spec(A)$. Then, the following conditions are equivalent.
\begin{enumerate}[\hspace{0.6cm}\rm (i)]
\item $s_Y$ and $s_Z$ are weakly equivalent.
\item $\ad^{\rm i}(Y)=\ad^{\rm i}(Z)$, that is, $Y$ and $Z$ have the same closure, with respect to the inverse topology. 
\item If $\f a$ is a finitely generated ideal of $A$ and $D(\f a):=\spec(A)-V(\f a)$, then $Y\subseteq D(\f a)$ if and only if $Z\subseteq D(\f a)$. 
\end{enumerate}
\end{prop}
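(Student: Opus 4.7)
The plan is to prove (i)$\iff$(iii) via a description of the stable closure in terms of a Gabriel-type filter of finitely generated ideals, and (ii)$\iff$(iii) by invoking the description of the inverse topology on $\spec(A)$. For the latter, recall that a base of closed sets for the inverse topology on $\spec(A)$ consists of the open and compact subsets of the Zariski topology, which are precisely the sets $D(\f a)$ with $\f a$ finitely generated (every $D(f)$ is compact, and any open compact set is a finite union of basic opens). Consequently
\[
\ad^{\rm i}(Y) = \bigcap\{D(\f a): \f a \text{ f.g. ideal of } A,\ Y\subseteq D(\f a)\},
\]
and (iii) is visibly equivalent to this intersection being the same for $Y$ and $Z$; the direction ``equal closures $\Rightarrow$ (iii)'' follows from $Y \subseteq D(\f a) \Rightarrow Z \subseteq \ad^{\rm i}(Z) = \ad^{\rm i}(Y) \subseteq D(\f a)$.

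For (i)$\iff$(iii), I would first check the key observation that, for any finitely generated ideal $\f a$ of $A$, $\f a^{s_Y} = A^{s_Y}$ iff $Y \subseteq D(\f a)$. Indeed, if $Y \subseteq D(\f a)$, then $\f a A_\f p = A_\f p$ for every $\f p \in Y$, so the equality is clear; conversely, the equality forces $1 \in \f a^{s_Y}\subseteq \f a A_\f p$ for each $\f p \in Y$, whence $\f a \not\subseteq \f p$. Thus (iii) translates exactly into the equality of the filters $\mc F(\star) := \{\f a \text{ f.g. ideal of } A : \f a^\star = A^\star\}$ for $\star = s_Y$ and $\star = s_Z$.

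To finish, I would show that $\widetilde\star$ is determined by $\mc F(\star)$ alone; concretely, $\mc F(\star) = \mc F(\widetilde\star)$ for every $\star$. One inclusion follows from the defining formula $F^{\widetilde\star} = \bigcup\{(F:\f b): \f b \in \mc F(\star)\}$: if $\f a \in \mc F(\star)$, taking $\f b = \f a$ gives $1 \in (\f a:\f a) \subseteq \f a^{\widetilde\star}$, so $A \subseteq \f a^{\widetilde\star}$ (as $\f a^{\widetilde\star}$ is an $A$-module), and by idempotence $A^{\widetilde\star} \subseteq \f a^{\widetilde\star}$; the reverse inclusion is automatic. Conversely, if $\f a^{\widetilde\star} = A^{\widetilde\star}$ with $\f a$ finitely generated, then $1 \in \bigcup_{\f b \in \mc F(\star)} (\f a:\f b)$, forcing $\f b \subseteq \f a$ for some $\f b \in \mc F(\star)$ and hence $\f a^\star = A^\star$. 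Combined with the defining formula, this yields $\widetilde{s_Y} = \widetilde{s_Z} \iff \mc F(s_Y) = \mc F(s_Z)$, closing the chain. The main anticipated obstacle is the identity $\mc F(\star) = \mc F(\widetilde\star)$, which guarantees that the stable closure is genuinely determined by its finitely generated ``dense ideals''; the remaining steps are bookkeeping on the definitions.
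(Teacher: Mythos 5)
Your proof is correct, and its overall skeleton matches the paper's: both arguments reduce (ii) to (iii) via the fact that the basic closed sets of the inverse topology on $\spec(A)$ are exactly the open compact sets $D(\f a)$ with $\f a$ finitely generated, and both reduce (i) to (iii) via the observation that, for finitely generated $\f a$, $\f a^{s_Y}=A^{s_Y}$ if and only if $Y\subseteq D(\f a)$. The one genuine difference is how the bridge from weak equivalence to condition (iii) is justified: the paper simply cites Picozza--Tartarone (Proposition 2.4(iii)) for the statement that $\widetilde{\star_1}=\widetilde{\star_2}$ exactly when $\star_1$ and $\star_2$ determine the same family $\mc F(\star)$ of finitely generated ideals $\f a$ with $\f a^\star=A^\star$, whereas you prove this from scratch. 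Your argument is sound: the implication $\mc F(s_Y)=\mc F(s_Z)\Rightarrow\widetilde{s_Y}=\widetilde{s_Z}$ is immediate from the defining formula $F^{\widetilde\star}=\bigcup\{(F:\f b):\f b\in\mc F(\star)\}$, and the converse follows from your identity $\mc F(\star)=\mc F(\widetilde\star)$, whose two inclusions you verify correctly (one from $1\in(\f a:\f a)$ plus idempotence, the other because $1\in\f a^{\widetilde\star}$ forces $\f b\subseteq\f a$ for some $\f b\in\mc F(\star)$). What your version buys is a self-contained proof with no external reference; what the paper's version buys is brevity. No gaps.
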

\begin{proof}
As a preliminary, note that if $\f a:=(f_1,\z,f_n)A$ is a finitely generated ideal of $A$, then $D(\f a)=\bigcup_{i=1}^nD(f_i)$ is an open and compact subspace of $\spec(A)$ and thus it is, by definition, a basic closed set, with respect to the inverse topology. Conversely, if $D(\f a)$ is compact and $\f a=(f_\lambda)_{\lambda\in\Lambda}$, then $D(\f a)=\bigcup_{\lambda\in\Lambda} D(f_\lambda)=\bigcup_{i=1}^n D(f_{\lambda_i})=D(\f b)$, where $\f b=(f_{\lambda_1},\ldots,f_{\lambda_n})$ is finitely generated.

Suppose that condition (ii) holds. Thus, for any finitely generated ideal $\f a$ of $A$, we have $Y\subseteq D(\f a)$ if and only if $\ad^{\rm i}(Z)=\ad^{\rm i}(Y)\subseteq D(\f a)=\ad^{\rm i}(D(\f a))$, if and only if $Z\subseteq D(\f a)$; hence, (iii) holds.

Conversely, assume that condition (iii) holds, and let $\Omega$ be an open and compact subspace of $\spec(A)$, i.e., $\Omega=D(\f a)$, for some finitely generated ideal $\f a$ of $A$. Then $\Omega\supseteq Y$ if and only if $\Omega\supseteq Z$, i.e., by definition, $\ad^{\rm i}(Y)=\ad^{\rm i}(Z)$, and (ii) holds.

To conclude, we show that (i) and (iii) are equivalent. By \cite[Proposition 2.4(iii)]{pi-ta}, $s_Y$ and $s_Z$ are weakly equivalent if and only if for any finitely generated ideal $\f a$ of $A$, we have $\f a^{s_Y}=A^{s_Y}\iff \f a^{s_Z}=A^{s_Z}$. By basic properties of localizations, the last statement is clearly equivalent to condition (iii). The proof is now complete. 
\end{proof}
\begin{cor}\label{stable-closure}
Let $A$ be an integral domain and $Y$ be a subset of $\spec(A)$. Then $\widetilde{s_Y}=s_{\ad^{\rm i}(Y)}$.
\end{cor}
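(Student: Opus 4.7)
The plan is to reduce the statement to an immediate combination of three facts already available: Proposition \ref{weakly-eq}, Corollary \ref{fin-spectral-car}, and the background fact that the stable closure of a semistar operation that is already spectral and of finite type is the operation itself (Example \ref{ex:background-semistar}(k), equivalence of (i) and (iii)).

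Concretely, set $Z := \ad^{\rm i}(Y)$. The first step is to observe that $\ad^{\rm i}(Z) = \ad^{\rm i}(\ad^{\rm i}(Y)) = \ad^{\rm i}(Y)$, because the inverse-topology closure is idempotent. Hence by the equivalence (i)$\Leftrightarrow$(ii) in Proposition \ref{weakly-eq}, the operations $s_Y$ and $s_Z$ are weakly equivalent, that is, $\widetilde{s_Y} = \widetilde{s_Z}$.

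The second step is to show that the right-hand side $s_Z$ is already of finite type. This is where the compactness of inverse-closed sets enters: from the background discussion in Subsection \ref{inv-rem}, $\ad^{\rm i}(Y) = Z$ is compact in the Zariski topology of $\spec(A)$. Applying Corollary \ref{fin-spectral-car} to $\Delta = Z$, we conclude that $s_Z$ is of finite type. Since $s_Z$ is spectral by construction, Example \ref{ex:background-semistar}(k) gives $\widetilde{s_Z} = s_Z$.

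Combining the two steps yields $\widetilde{s_Y} = \widetilde{s_Z} = s_Z = s_{\ad^{\rm i}(Y)}$, which is exactly the claim. There is essentially no obstacle here — the work was done in Proposition \ref{weakly-eq} and Corollary \ref{fin-spectral-car}; the only mild point worth spelling out in the write-up is the appeal to the compactness of $\ad^{\rm i}(Y)$ with respect to the original (Zariski) topology, since that is the topology that Corollary \ref{fin-spectral-car} refers to.
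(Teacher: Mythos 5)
Your proof is correct and follows essentially the same route as the paper: compactness of $\ad^{\rm i}(Y)$ in the Zariski topology gives that $s_{\ad^{\rm i}(Y)}$ is of finite type (hence equals its own stable closure), and Proposition \ref{weakly-eq} identifies $\widetilde{s_Y}$ with $\widetilde{s_{\ad^{\rm i}(Y)}}$. The only cosmetic difference is that you cite Corollary \ref{fin-spectral-car} where the paper invokes Proposition \ref{compact-fintype} directly; your citation is if anything the more immediate one.
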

\begin{proof}
By Section \ref{inv-rem}, the set $\ad^{\rm i}(Y)$ is compact, with respect to the Zariski topology. Hence the stable semistar operation $s_{\ad^{\rm i}(Y)}$ is also of finite type, by Proposition \ref{compact-fintype}, and thus $s_{\ad^{\rm i}(Y)}=\widetilde{s_{\ad^{\rm i}(Y)}}$. The conclusion follows now immediately from Proposition \ref{weakly-eq}. 
\end{proof}

\begin{oss} Note that the finite type closure of a spectral semistar operation may be not spectral (see \cite[pag. 2466]{an-co}). For example, let $A$ be an essential domain that is not a P$v$MD (see \cite{he-oh}), and let $Y$ be a subset of $\spec(A)$ such that $A_{\f p}$ is a valuation domain, for any $\f p\in Y$ and $A=\bigcap\{A_{\f p}:\f p\in Y\}$. By \cite[Proposition 44.13]{gi} we have $(s_Y)_f=t$, but $t$ is not spectral since the fact that $A$ is not a P$v$MD implies that there are finitely generated fractional ideals $F,G$ of $A$ such that $(F\cap G)^t\neq F^t\cap G^t$ \cite[Theorem 6]{an2}.
\end{oss}
An integral domain is called \emph{a DW domain} if the semistar operations $d$ and $w:=\widetilde t$ are the same. For example, all Pr\"ufer domains are DW domains, as are one-dimensional domains.
\begin{cor}
For an integral domain $A$, the following conditions are equivalent. 
\begin{enumerate}[\hspace{0.6cm}\rm (i)]
\item $A$ is a DW domain.
\item Every $Y\subseteq\spec(A)$ such that $\bigcap_{P\in Y}A_P=A$ is dense in $\spec(A)$, with respect to the inverse topology.
\item $\qmax^t(A)$ is dense in $\spec(A)$, with respect to the inverse topology. 
\end{enumerate}
\end{cor}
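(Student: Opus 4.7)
The plan is to run a cycle $(iii)\Rightarrow(i)\Rightarrow(ii)\Rightarrow(iii)$, built around two identifications. First, $w=\widetilde{t}=s_{\qmax^t(A)}$, as recorded in Example \ref{ex:background-semistar}(j). Second, $d=s_{\spec(A)}$: given $x\in\bigcap_{\f p\in\spec(A)}FA_{\f p}$, the ideal $(F:xA)\cap A$ meets $A\setminus\f p$ for every $\f p$ and thus equals $A$, so $x\in F$. Both $d$ and $w$ are spectral and of finite type, so they coincide with their own stable closures.

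The easy directions are $(iii)\Rightarrow(i)$ and $(ii)\Rightarrow(iii)$. For the first, I would apply Corollary \ref{stable-closure} to $Y:=\qmax^t(A)$: condition (iii) gives $\widetilde{s_{\qmax^t(A)}}=s_{\spec(A)}=d$, while the left-hand side equals $\widetilde w=w$; hence $w=d$. For the second, I would take $Y:=\qmax^t(A)$ and observe $\bigcap_{\f p\in Y}A_{\f p}=A^w=A$, since $w$ is a (semi)star operation, so (ii) applies.

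The core of the argument is $(i)\Rightarrow(ii)$. Let $Y\subseteq\spec(A)$ satisfy $\bigcap_{\f p\in Y}A_{\f p}=A$, i.e.\ $A^{s_Y}=A$. Then $\widetilde{s_Y}$ inherits the property $A^{\widetilde{s_Y}}=A$ (either by a direct check from the formula in Example \ref{ex:background-semistar}(j), or via Corollary \ref{stable-closure} together with $Y\subseteq\ad^{\rm i}(Y)$), so $\widetilde{s_Y}$ is a (semi)star operation of finite type and, by the maximality of $t$ recalled in Example \ref{ex:background-semistar}(d), $\widetilde{s_Y}\leq t$. The key ancillary observation is that the tilde construction is monotone: if $\sigma\leq\tau$ and $\f a\subseteq A$ is a finitely generated ideal with $1\in \f a^\sigma$, then $1\in \f a^\tau$, and the formula in Example \ref{ex:background-semistar}(j) then yields $\widetilde\sigma\leq\widetilde\tau$. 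Applying this with $\sigma=\widetilde{s_Y}$, $\tau=t$ and invoking (i), I obtain $\widetilde{s_Y}=\widetilde{\widetilde{s_Y}}\leq\widetilde t=w=d$, so $\widetilde{s_Y}=d$. Corollary \ref{stable-closure} then gives $s_{\ad^{\rm i}(Y)}=d=s_{\spec(A)}$, and Proposition \ref{weakly-eq} upgrades this equality to $\ad^{\rm i}(Y)=\ad^{\rm i}(\ad^{\rm i}(Y))=\ad^{\rm i}(\spec(A))=\spec(A)$.

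The only step not essentially contained in the earlier results is the monotonicity of $\widetilde{\ }$, which is a single-line verification from the defining formula and which I would record explicitly; everything else is bookkeeping using the dictionary between the algebraic inequalities on spectral finite-type operations and the inclusions of inverse closures inside $\spec(A)$.
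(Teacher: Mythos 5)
Your proof is correct and follows essentially the same route as the paper's: the same cycle of implications built on the identifications $w=\widetilde t=s_{\qmax^t(A)}$ and $d=s_{\spec(A)}$, together with Corollary \ref{stable-closure} and Proposition \ref{weakly-eq}. The only cosmetic difference is that in (i)$\Rightarrow$(ii) the paper applies monotonicity of the stable closure to $s_Y\leq v$ (getting $\widetilde{s_Y}\leq\widetilde v=w$) rather than to $\widetilde{s_Y}\leq t$; your explicit one-line verification of that monotonicity is a detail the paper leaves tacit.
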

\begin{proof}
(i$\Longrightarrow$ ii) If $A$ is a DW domain and $\bigcap_{P\in Y}A_P=A$, then $s_Y$ is a star operation and therefore $\tilde{s}_Y\leq\tilde{v}=w=d=s_{\spec(A)}$; hence $Y$ is dense in $\spec(A)$.

(ii$\Longrightarrow$ iii) It is enough to note that $\bigcap_{P\in\qmax^t(A)}A_P=A$.

(iii$\Longrightarrow$ i) If $\qmax^t(A)$ is dense, then $w=s_{\qmax^t(A)}=s_{\spec(A)}=d$.
\end{proof}

Following \cite{folo}, we say that a semistar operation $\star$ on an integral domain $A$ is  \emph{quasi-spectral} or \emph{semifinite} if every proper quasi-$\star$-ideal is contained in some quasi-$\star$-prime ideal. Of course, this condition is equivalent to the requirement that, for any ideal $\f a$ of $A$, we have $\qspec^\star(A)\subseteq D(\f a):=\spec(A)-V(\f a)$ if and only if $\f a^\star\cap A=A$. 

The following example will show that the class of the semifinite semistar operations is very ample.
\begin{ex}\label{semifinite-example}
Let $A$ be an integral domain. 
\begin{enumerate}[(a)]
\item If $\star$ is a finite type semistar operation, then every proper quasi-$\star$ ideal is contained in some quasi-$\star$-maximal ideal, which is prime (see Section \ref{sect:background-semistar}). Thus every semistar operation of finite type is semifinite.
\item\label{semifinite-example:inf} If $\mathcal S:=\{\star_i:i\in I   \}$ is a nonempty family of semifinite semistar operations on $A$, then $\star:=\bigwedge(\mathcal S)$ is semifinite. Indeed, let $\f a$ be a proper quasi-$\star$-ideal of $A$. Then, by definition, for some $i_0\in I$, we have $1\notin \f a^{\star_{i_0}}$, i.e., $\f a^{\star_{i_0}}\cap A$ is a proper quasi-$\star_{i_0}$-ideal of $A$. By assumption, there is a quasi-$\star_{i_0}$-prime ideal $\f p$ containing $\f a^{\star_{i_0}}\cap A$ and, since $\star\leq \star_{i_0}$, $\f p$ is a quasi-$\star$-prime ideal. Finally $\f a=\f a^\star\cap A\subseteq \f a^{\star_{i_0}}\cap A\subseteq \f p$. 
\item By part (a) and (b), every semistar operation of the form $\wedge_Y$, where $Y$ is a nonempty subspace of $\Over(A)$, is semifinite. 
\item Not all semistar operations are semifinite: for example, if $(V,M)$ is a valuation domain of dimension 1 which is not discrete, then $M^v=V$, and thus every nonzero principal ideal is a $v$-ideal which is not contained in any $v$-prime. In general, if $(V,M)$ is a valuation domain, the $v$-operation is semifinite if and only if $M^v=M$, if and only if the $v$-operation coincides with the identity.
\end{enumerate}
\end{ex}

\begin{oss}
As we saw in Example \ref{semifinite-example}(a), every semistar operation of finite type is semifinite. The converse is not true. To see this, by Corollary \ref{fin-spectral-car} and Example \ref{semifinite-example}(c), it suffices to  consider a spectral semistar operation of the form $s_\Delta$, where $\Delta$ is a non-compact collection of prime ideals. For a somewhat extreme example, consider the ring $A:=K[X_0,\ldots,X_n,\ldots]$ of the polynomials in infinitely many indeterminates over a field $K$, and let $\Delta$ be the set of finitely generated prime ideals of $A$. We claim that $s_\Delta$ has no quasi-$\star$-maximal ideals. Indeed, if $M\in\qmax^{s_\Delta}(A)$, then $M^{s_\Delta}\neq A^{s_{\Delta}}$, and thus $MR_P\neq R_P$ for some finitely generated prime ideal $P$. Since $P\in\qspec^{s_\Delta}(A)$, it follows that $M=P$, i.e., $M$ is finitely generated. Let $M=(f_1,\ldots,f_n)$, and choose an indeterminate $X_N$ which does not appear in any $f_i$. Then the ide!
 al $M'$ generated by $M$ and $X_N$ is prime and finitely generated, and thus it is in $\qspec^{s_\Delta}(A)$. However, $M'$ is also strictly greater than $M$, against the maximality of $M$.
\end{oss}
While Remark \ref{semifinite-example}(\ref{semifinite-example:inf}) implies that the infimum of a family of finite type semistar operations is semifinite, not every semifinite operation arises this way, as the following example shows.
\begin{ex}
 Let $A$ be a non-local Dedekind domain and let $K$ be the quotient field of $A$: then, $A$ admits proper overrings different from $K$, and every proper overring of $A$ is not a fractional ideal of $A$. In fact, if $B\in\Over(A)$ is a fractional ideal of $A$, then it is finitely generated as an $A$-module, and thus integral over $A$. On the other hand, $A$ is integrally closed, since it is Dedekind, and thus $A=B$. 

Define a semistar operation $\star$ on $A$ by setting
\begin{equation*}
F^\star:=\begin{cases}
F & \text{if~} F\in\mf F(A)\\
K & \text{if~} F\in\inssubmod(A)-\mf F(A)\\
\end{cases}
\end{equation*}
Clearly, $\star$ is semifinite (every prime ideal is $\star$-closed) but not of finite type since, if $B$ is a proper overring of $A$ different from $K$, we have $B^\star=K$ and, on the other hand,
$$
\bigcup\{F^\star:F\in\mf f(A),F\subseteq B  \}=\bigcup\{F:F\in\mf f(A),F\subseteq B  \}=B\subsetneq K.
$$
Let $\sharp$ be a semistar operation of finite type such that $\sharp\geq\star$. We claim that $\sharp=\wedge_{\{K\}}$. Since $\sharp\geq \star$,  every $\sharp$-closed nonzero $A$-submodule  of $K$ is $\star$-closed. Since it is well known that $A^\ast$ is an overring of $A$ for every semistar operation $\ast$, $A^\sharp$ is a $\star$-closed overring of $A$, and thus either $A^\sharp=A$ or $A^\sharp=K$. In the first case, $\sharp|_{\mf F(A)}$ is a star operation of finite type and, since $A$ is, in particular, a Pr\"ufer domain, $\sharp|_{\mf F(A)}$ is the identity, by the final part of Example \ref{sect:background-semistar}(\ref{1.1d}). Keeping in mind that $\sharp$ is of finite type it follows that, for any $F\in \inssubmod(A)$, we have
$$
F^\sharp=\bigcup\{G^\sharp:G\subseteq F, G\in \mf f(A) \}=\bigcup\{G:G\subseteq F, G\in \mf f(A) \}=F,
$$
i.e., $\sharp$ is the identity semistar operation on $A$, against the fact that $\sharp\geq \star>d$. Thus the only case that may occur is $A^\sharp=K$. In this case, it is easy to infer that $\sharp=\wedge_{\{K  \}}$. 
 Since $\star\neq\wedge_{\{K\}}$, $\star$ is not the infimum of a family of semistar operations of finite type.
\end{ex}

\begin{prop}
Let $A$ be an integral domain and let $\star$ be a semifinite semistar operation on $A$. Then $\widetilde \star=s_{\ad^{\rm i}(\qspec^\star(A))}$.
\end{prop}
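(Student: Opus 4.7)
The plan is to reduce the claim, via Corollary \ref{stable-closure} and the formula $\widetilde{\star}=s_{\qmax^{\star_f}(A)}$ from Example \ref{ex:background-semistar}(j), to proving that the two spectral operations $s_{\qmax^{\star_f}(A)}$ and $s_{\qspec^\star(A)}$ are weakly equivalent, and then to verify that weak equivalence by means of Proposition \ref{weakly-eq}(iii).

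Concretely, Corollary \ref{stable-closure} applied with $Y=\qmax^{\star_f}(A)$ gives
\[
\widetilde{\star}=\widetilde{\widetilde{\star}}=\widetilde{s_{\qmax^{\star_f}(A)}}=s_{\ad^{\rm i}(\qmax^{\star_f}(A))},
\]
while the same corollary with $Y=\qspec^\star(A)$ gives $\widetilde{s_{\qspec^\star(A)}}=s_{\ad^{\rm i}(\qspec^\star(A))}$. Therefore the identity $\widetilde{\star}=s_{\ad^{\rm i}(\qspec^\star(A))}$ is equivalent to $\widetilde{s_{\qmax^{\star_f}(A)}}=\widetilde{s_{\qspec^\star(A)}}$, i.e. to the weak equivalence of the two displayed spectral operations. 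By Proposition \ref{weakly-eq}(i)$\Leftrightarrow$(iii), this in turn reduces to the claim that, for every finitely generated ideal $\f a$ of $A$,
\[
\qmax^{\star_f}(A)\subseteq D(\f a)\iff\qspec^\star(A)\subseteq D(\f a).
\]

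Both sides will be shown to be equivalent to the single condition $1\in\f a^\star$. For the set $\qmax^{\star_f}(A)$: since $\star_f$ is of finite type, Zorn's Lemma (Section \ref{sect:background-semistar}) places every proper quasi-$\star_f$-ideal below a quasi-$\star_f$-maximal ideal, so $\qmax^{\star_f}(A)\subseteq D(\f a)$ iff $\f a^{\star_f}\cap A=A$; because $\f a$ is finitely generated we have $\f a^{\star_f}=\f a^\star$, and the condition becomes $1\in\f a^\star$. For the set $\qspec^\star(A)$: one direction is immediate, since any prime $\f p\supseteq\f a$ with $\f p^\star\cap A=\f p$ would contain $\f a^\star\cap A$; the converse is precisely where the semifinite hypothesis intervenes, as it guarantees that the quasi-$\star$-ideal $\f a^\star\cap A$, when proper, is contained in a quasi-$\star$-prime. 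Hence $\qspec^\star(A)\subseteq D(\f a)$ iff $\f a^\star\cap A=A$, again equivalent to $1\in\f a^\star$.

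The main (and only non-formal) step is the right-hand translation above, which uses the semifinite hypothesis essentially; without it one can still build a quasi-$\star$-ideal containing $\f a$, but not necessarily a quasi-$\star$-prime, so the equivalence with membership of $\qspec^\star(A)$ in $D(\f a)$ would break down. Everything else is a formal combination of the formula $\widetilde{\star}=s_{\qmax^{\star_f}(A)}$, Corollary \ref{stable-closure}, and Proposition \ref{weakly-eq}.
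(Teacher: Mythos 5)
Your proof is correct and follows essentially the same route as the paper's: both reduce, via Proposition \ref{weakly-eq} and the identity $\widetilde{\star}=s_{\qmax^{\star_f}(A)}$, to checking that $\qmax^{\star_f}(A)$ and $\qspec^\star(A)$ lie in the same compact open sets $D(\f a)$, with each containment translated into $\f a^\star\cap A=A$ (the semifinite hypothesis handling the $\qspec^\star(A)$ side). The only cosmetic difference is that you route the compactness of $\ad^{\rm i}(Y)$ through Corollary \ref{stable-closure} rather than citing it directly.
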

\begin{proof}
Since $Y:=\ad^{\rm  i}(\qspec^\star(A))$ is compact (Section \ref{inv-rem}), we have $s_Y=\widetilde{s_{Y}}$. Keeping in mind that $\widetilde \star=s_{\qmax^{\star_f}(A)}=\widetilde{s_{\qmax^{\star_f}(A)}}$, by Proposition \ref{weakly-eq} it is enough to show that $\ad^{\rm i}(Y)=\ad^{\rm i}(\qmax^{\star_f}(A))$.  Let $\f a$ be a finitely generated ideal of $A$ and set $D(\f a):=\spec(A)-V(\f a)$. Since $\star$ is semifinite we have
$$
\qmax^{\star_f}(A)\subseteq D(\f a)\iff \f a^\star\cap A=A\iff \qspec^\star(A)\subseteq D(\f a).
$$
By Proposition \ref{weakly-eq}, the conclusion is now clear. 
\end{proof}
The following corollary is now immediate.
\begin{cor}
Let $A$ be an integral domain and let $\star_1,\star_2$ be semifinite semistar operations on $A$. Then $\widetilde{\star_1}=\widetilde{\star_2}$ if and only if $\ad^{\rm i}(\qspec^{\star_1}(A))=\ad^{\rm i}(\qspec^{\star_2}(A))$. In particular, $\tilde{\star}=d$ if and only if $\qspec^\star(A)$ is dense in $\spec(A)$ with respect to the inverse topology.
\end{cor}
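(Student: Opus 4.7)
The plan is to read this as a direct corollary of the previous Proposition together with Proposition \ref{weakly-eq}. Setting $Y_i := \ad^{\rm i}(\qspec^{\star_i}(A))$ for $i=1,2$, the previous Proposition rewrites the stable closures as $\widetilde{\star_i} = s_{Y_i}$, so the equivalence to be proved becomes
\[
s_{Y_1} = s_{Y_2} \iff Y_1 = Y_2.
\]

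The implication ($\Longleftarrow$) is trivial from the definition of a spectral semistar operation. For ($\Longrightarrow$), I would argue as follows: if $s_{Y_1} = s_{Y_2}$ as semistar operations, then in particular their stable closures agree, i.e., $s_{Y_1}$ and $s_{Y_2}$ are weakly equivalent. Applying the equivalence (i)$\iff$(ii) of Proposition \ref{weakly-eq}, we conclude $\ad^{\rm i}(Y_1) = \ad^{\rm i}(Y_2)$. Since $Y_i$ is by construction the inverse-topology closure of a set, it is already closed in the inverse topology, so $\ad^{\rm i}(Y_i) = Y_i$, and $Y_1 = Y_2$ as required.

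For the \emph{in particular} claim, I would specialise the first part to $\star_1 = \star$ and $\star_2 = d$. The identity operation $d$ is stable and of finite type, hence semifinite and satisfying $\widetilde d = d$. Moreover every prime ideal of $A$ is trivially a quasi-$d$-prime ideal, so $\qspec^d(A) = \spec(A)$ and $\ad^{\rm i}(\qspec^d(A)) = \spec(A)$. The main part of the corollary therefore gives $\widetilde\star = d$ if and only if $\ad^{\rm i}(\qspec^\star(A)) = \spec(A)$, which is exactly the density of $\qspec^\star(A)$ in $\spec(A)$ with respect to the inverse topology.

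I do not foresee any real obstacle: everything is bookkeeping with references to the previous Proposition and Proposition \ref{weakly-eq}. The only point worth being careful about is observing that $Y_i$ is already inverse-closed so that the weak-equivalence criterion yields $Y_1 = Y_2$ and not merely equality of their inverse closures, but this is immediate from the definition of $Y_i$.
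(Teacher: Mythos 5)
Your argument is correct and is exactly the ``immediate'' derivation the paper intends (the paper gives no written proof, only the preceding Proposition and Proposition \ref{weakly-eq}): rewrite $\widetilde{\star_i}=s_{Y_i}$ with $Y_i$ inverse-closed, use weak equivalence to get $\ad^{\rm i}(Y_1)=\ad^{\rm i}(Y_2)$, and note $\ad^{\rm i}(Y_i)=Y_i$. The specialisation to $\star_2=d$ with $\qspec^d(A)=\spec(A)$ is likewise the intended reading of the ``in particular'' clause.
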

\section*{Acknowledgments}
The authors would like to thank the referee for his/her careful reading of the paper and his/her comments and suggestions. 

\end{document}